\numberwithin{equation}{section}
\theoremstyle{plain}
\newtheorem{thm}{Theorem}[section]
\newtheorem{pro}[thm]{Proposition}
\newtheorem{lem}[thm]{Lemma}
\newtheorem{cor}[thm]{Corollary}
\theoremstyle{definition}
\newtheorem*{dfn*}{Definition}
\newtheorem{exa}[thm]{Example}
\theoremstyle{remark}
\newtheorem{rem}[thm]{Remark}
\newcommand*{\card}[1]{\mathrm{card}\,#1}
\newcommand*{\cbb}{\mathbb C}
\newcommand*{\D}{\,\mathrm d}
\newcommand*{\Le}{\leqslant}
\newcommand*{\Ge}{\geqslant}
\newcommand*{\ideal}[1]{\mathcal I(#1)}
\newcommand*{\idealc}[1]{\mathcal I_\mathsf{\hspace{.25ex}c}(#1)}
\newcommand*{\idealf}[1]{\mathcal I_{#1}}
\newcommand*{\idealfc}[1]{\mathcal I^{\hspace{.25ex}\mathsf{c}}_{#1}}
\newcommand*{\I}{\mathrm{i}}
\newcommand*{\pcal}{\mathcal P}
\newcommand*{\rcal}{\mathcal R}
\newcommand*{\rbb}{\mathbb R}
\newcommand*{\zera}[1]{\mathcal Z(#1)}
\begin{document}

   \title[Algebraic sets of types A, B, and C coincide]
{Algebraic sets of types A, B, and C coincide}

   \author{Torben Maack Bisgaard}

%IMUJ%\author[Jan Stochel (and ***)]{Jan Stochel (and
%IMUJ%***) \\ \vskip 1.3cm \hspace{7cm} IMUJ PREPRINT
%IMUJ%****/** \phantom{1} \\ \vskip 1cm}

   \address{Nandrupsvej 7 st.\ th., DK-2000 Frederiksberg,
Denmark}

   \email{torben.bisgaard@get2net.dk}

   \author{Jan Stochel}

   \address{Instytut Matematyki, Uniwersytet Jagiello\'nski,
ul. Reymonta 4, PL-30059 Krak\'ow} \email{stochel@im.uj.edu.pl}

\thanks{Running expenses of
the first author were covered by the Carlsberg
Foundation. The second author has been supported by
the MNiSzW grant N201 026 32/1350.}

\subjclass{Primary 44A60, 14P05; Secondary 13F20,
42C05}

\keywords{Polynomial ideal, real radical, positive
definite functional, multidimensional moment problem,
(real) algebraic set of type A, B, C, orthogonal
polynomials in several variables}

%\dedicatory{This paper is dedicated to .}

   \begin{abstract} It is proved that the definition of an
algebraic set of type {\sf A} (a notion related to the
multidimensional Hamburger moment problem) does not
depend on the choice of a polynomial describing the
algebraic set in question and that an algebraic set of
type {\sf B} is always of type {\sf A}. This answers
in the affirmative two questions posed in 1992 by the
second author. It is also shown that an algebraic set
is of type {\sf A} if and only if it is of type {\sf
C} (a notion linked to orthogonality of polynomials of
several variables). This, in turn, enables us to
answer three questions posed in 2005 by Cicho\'n,
Stochel, and Szafraniec.
   \end{abstract}
   \maketitle
   \section{Introduction} It is well known that
each Hamburger moment $n$-sequence on an algebraic subset
$V$ of $\rbb^n$ is positive definite and satisfies a
certain system of algebraic equations generated by $V$. If
the converse holds, the algebraic set $V$ is called of {\em
type} {\sf A} or {\sf B}, depending on the system of
equations (cf.\ \cite{sto}). The knowledge of various
classes of algebraic sets of type {\sf A} seems to be of
great importance because of applications, not only in
moment problems, but also in the theory of orthogonal
polynomials in several variables (cf.\ \cite{c-s-sz}). We
now provide a brief overview of some known classes of
algebraic sets of type {\sf A}. First, note that any
algebraic subset $V$ of $\rbb$ is of type {\sf A}. Indeed,
the case of $V = \rbb$ follows from Hamburger's theorem
(cf.\ \cite{b-ch-r}), while the case of $V
\varsubsetneq\rbb$ can be inferred from \cite[Lemma
49]{c-s-sz} (or \cite[Theorem 43]{c-s-sz}). In turn, each
compact algebraic subset of $\rbb^n$ is of type {\sf A}
(see \cite[Theorem 43]{c-s-sz} which heavily depends on
\cite[Theorem 1]{sch}). By \cite[Theorem 5.4]{sto}, every
algebraic subset of $\rbb^2$ induced by a non-zero
polynomial $P \in \rbb[X_1,X_2]$ of degree at most $2$ is
of type {\sf A}. However, as shown in \cite[Theorem
6.3]{sto}, there exists an algebraic subset $V$ of $\rbb^2$
induced by a polynomial $P \in \rbb[X_1,X_2]$ of degree $3$
(e.g.\ $P= X_2(X_2 - X_1^2)$) such that $V$ is not of type
{\sf A} (using a quotient technique from Section \ref{s5},
the careful reader can deduce from \cite{na-sa} that the
polynomial $P=X_1^3-X_2^2$ induces an algebraic subset of
$\rbb^2$ which is not of type {\sf A}). The question of
when an algebraic subset of $\rbb^n$ induced by a
polynomial $P \in \rbb[X_1, \ldots, X_n]$ of the form
$P=X_1^{\alpha_1} \cdots X_n^{\alpha_n} - X_1^{\beta_1}
\cdots X_n^{\beta_n}$ ($\alpha_1, \ldots, \alpha_n$ and
$\beta_1, \ldots, \beta_n$ are nonnegative integers) is of
type {\sf A} has been completely answered in \cite{bis}.
The interested reader is referred to
\cite{rie,hav1,hav2,dev,cho,ak,sch0,mcg,fug,sto,st-sz,bis,st-sz2,st-sz3,c-s-sz}
for further information concerning this subject including
other classes of sets of type {\sf A} (see also
\cite{atz,mas,cas,sch,c-p,pv,c-f,c-f2,pow-sch,sch2,fial,pow1,c-f3,c-f4}
for moments on semi-algebraic sets and related problems).

In this paper we show that the definition of type {\sf A}
is correct and that types {\sf A} and {\sf B} coincide
(cf.\ Theorems \ref{popr} and \ref{A=B} as well as Theorems
\ref{popr+} and \ref{A=B+}). This answers in the
affirmative two questions posed in \cite{sto}. The main
tool in our considerations is the Real Nullstellensatz, the
celebrated theorem of real algebraic geometry. A
substantial part of the paper is devoted to the study of
positive definite functionals vanishing on polynomial
ideals.

The notion of type {\sf C} was invented in
\cite{c-s-sz} to distinguish a class of polynomial
ideals which have the property that each sequence of
polynomials in several variables satisfying an
appropriate three term recurrence relation modulo the
polynomial ideal is orthogonal with respect to some
positive measure. We prove that an algebraic subset
$V$ of $\rbb^n$ is of type {\sf A} if and only if the
set ideal $\idealc{V}$ is of type {\sf C} (cf.\
Theorem \ref{typC=A}), which answers in the
affirmative Question 2 posed in \cite{c-s-sz}. In
fact, Theorem \ref{typC=A} enables us to answer in the
negative two more questions (Questions 3 and 4) posed
in \cite{c-s-sz}. Namely, we show that there exists a
non-zero set ideal of $\cbb[X_1, \ldots, X_n]$ ($n \Ge
2$) which is not of type {\sf C} and that the zero
ideal of $\cbb[X_1, \ldots, X_n]$ ($n \Ge 2$) is not
of type {\sf C}. Let us point out that \cite[Question
4]{c-s-sz} was answered in the negative for $N=2$ by
Friedrich \cite{fr}; he constructed a strictly
positive definite linear functional on $\cbb[X_1,
X_2]$ which is not a moment functional, showing that
the zero ideal of $\cbb[X_1, X_2]$ is not of type {\sf
C}. The proof of Theorem \ref{typC=A} depends on a
quotient technique for finitely generated commutative
(real or complex) $*$-algebras with unit worked out in
Section \ref{s5}.

Types {\sf A}, {\sf B}, and {\sf C} are shown to be
equivalent also in the context of the multidimensional
complex moment problem (cf.\ Section \ref{s7}). The
paper ends with Appendix which contains complex
counterparts of Lemmata \ref{realradical} and
\ref{rn}.

Let us point out that in Sections $2$ to $5$ we use
the notation $(P)$, $\ideal Z$ and $\idealf
\varLambda$ only in the case of the real algebra
$\rbb[X_1, \ldots, X_n]$, while in Sections \ref{s5},
\ref{s6}, and \ref{s7} the same notation is used also
in the case of the complex $*$-algebra $\cbb[X_1,
\ldots, X_n]$. Since it is clear from the context
which algebra is considered, there is no clash of
notation. In Sections \ref{s3} and \ref{s4}, and
Appendix \ref{s8} the complexified notation
$(P)_\mathsf c$, $\idealc{Z}$ and $\idealfc\varLambda$
is employed in order to distinguish it from the real
one in the case when both are used simultaneously.
   \section{\label{s1}Positive definite
functionals vanishing on ideals}
   In what follows $\rbb$ and $\cbb$ stand for the fields
of real and complex numbers respectively. Let $n$ be a
positive integer. Denote by $\rbb[X_1, \ldots, X_n]$ the
real algebra of all polynomials in $n$ commuting
indeterminates $X_1, \dots, X_n$ with real coefficients.
Given a polynomial $P \in \rbb[X_1, \ldots, X_n]$, a subset
$Z$ of $\rbb^n$ and an ideal $I$ of $\rbb[X_1, \ldots,
X_n]$, we write
   \begin{align}
(P) & = \text{ the ideal of $\rbb[X_1, \ldots, X_n]$
generated by } P, \notag
   \\
   \ideal{Z} & = \{P \in \rbb[X_1, \ldots, X_n] \colon
P(x)=0 \text{ for every } x \in Z\}, \notag
   \\
\zera{I} & = \{x \in \rbb^n \colon P(x)=0 \text{ for every
} P \in I\}. \label{zera}
   \end{align}
The following properties of $\zera{\cdot}$ and
$\ideal{\cdot}$ are easy and elementary to prove.
   \begin{lem} \label{dodat}
If $I$ and $J$ are ideals of $\rbb[X_1, \ldots, X_n]$ such
that $I \subset J$, then $I \subset \ideal{\zera{I}}
\subset \ideal{\zera{J}}$. If $Z \subset \rbb^n$, then $Z
\subset \zera{\ideal{Z}}$ and
$\ideal{Z}=\ideal{\zera{\ideal Z}}$.
   \end{lem}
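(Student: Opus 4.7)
The plan is to establish the four assertions by unraveling the definitions and exploiting the fact that $\ideal{\cdot}$ and $\zera{\cdot}$ form a Galois connection between subsets of $\rbb^n$ and ideals of $\rbb[X_1,\ldots,X_n]$. First I would verify the two ``reflection'' inclusions $I \subset \ideal{\zera{I}}$ and $Z \subset \zera{\ideal{Z}}$ directly from the definitions: any $P \in I$ vanishes at every point of $\zera{I}$ by the very definition \eqref{zera} of $\zera{I}$, and hence lies in $\ideal{\zera{I}}$; symmetrically, any $x \in Z$ kills every polynomial in $\ideal{Z}$, so $x \in \zera{\ideal{Z}}$.

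Next I would record the inclusion-reversing nature of both operators: if $Z_1 \subset Z_2$ then $\ideal{Z_2} \subset \ideal{Z_1}$ (a polynomial vanishing on the larger set automatically vanishes on the smaller), and if $I \subset J$ then $\zera{J} \subset \zera{I}$ (a point annihilating all members of $J$ annihilates those of $I$). Applying these reversals twice in succession to the hypothesis $I \subset J$ yields $\ideal{\zera{I}} \subset \ideal{\zera{J}}$, which combined with the first reflection inclusion furnishes the full chain $I \subset \ideal{\zera{I}} \subset \ideal{\zera{J}}$.

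Finally, to deduce the equality $\ideal{Z} = \ideal{\zera{\ideal{Z}}}$ I would apply the first reflection inclusion with $I := \ideal{Z}$ to obtain ``$\subset$'', and apply the inclusion-reversing operator $\ideal{\cdot}$ to the second reflection $Z \subset \zera{\ideal{Z}}$ to obtain ``$\supset$''. Since each step is essentially a one-line unpacking of a definition, no step presents any genuine obstacle; the argument is a completely formal manipulation that would hold for any Galois connection of this shape, which is why the authors simply flag these properties as ``easy and elementary'' in the statement preceding the lemma.
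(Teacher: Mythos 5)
Your proposal is correct. The paper gives no proof of this lemma, merely asserting that these properties are ``easy and elementary to prove''; your Galois-connection argument (reflection inclusions plus inclusion-reversal, applied twice for the middle containment and once in each direction for the final equality) is precisely the standard one the authors have in mind, so there is nothing to contrast.
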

   An ideal $I$ of $\rbb[X_1, \ldots, X_n]$ is said to be a
{\em set ideal} if there exists a subset $Z$ of $\rbb^n$
such that $I = \ideal{Z}$. It is worth noting that there
are ideals which are not set ideals (e.g.\ $I=(X_1^s)$,
where $s \Ge 2$ is an integer). A set of the form $\zera
I$, where $I$ is an ideal of $\rbb[X_1, \ldots, X_n]$, is
called a {\em $($real\/$)$ algebraic set}. It is known that
for each algebraic subset $V$ of $\rbb^n$ there exists a
polynomial $P \in \rbb[X_1, \ldots, X_n]$ such that $V =
\mathcal Z_P := \{x \in \rbb^n \colon P(x)=0\}$ (indeed, by
Hilbert's basis theorem the ideal $I$ is generated by
finitely many polynomials $Q_1, \ldots, Q_m \in \rbb[X_1,
\ldots, X_n]$ and so $\zera I = \mathcal Z_Q$ with $Q=Q_1^2
+ \ldots + Q_m^2$). If $V = \mathcal Z_P$, then we say that
$V$ is an algebraic set {\em induced} by $P$. For
fundamentals of the theory of real algebraic sets we
recommend the monographs \cite{b-r,b-c-r}.

Let $I$ be an ideal of $\rbb[X_1, \ldots, X_n]$. We say
that $I$ is {\em real} if for every finite sequence $Q_1,
\ldots, Q_s \in \rbb[X_1, \ldots, X_n]$ such that $Q_1^2 +
\dots + Q_s^2 \in I$, we have $Q_1, \ldots, Q_s \in I$.
Denote by $\sqrt[{\mathrm R}]I$ the {\em real radical} of
$I$, that is the intersection of all real prime ideals of
$\rbb[X_1, \ldots, X_n]$ containing $I$ (if there is no
such prime ideal, then we put $\sqrt[{\mathrm R}]I =
\rbb[X_1, \ldots, X_n]$).

We now recall a useful description of the real radical.
   \begin{lem}[\mbox{\cite[Proposition 4.1.7]{b-c-r}}]
\label{realradical}
   Let $I$ be an ideal of $\rbb[X_1, \ldots, X_n]$. Then
$\sqrt[{\mathrm R}]I$ is the smallest real ideal of
$\rbb[X_1, \ldots, X_n]$ containing $I$. Moreover, a
polynomial $P \in \rbb[X_1, \ldots, X_n]$ belongs to
$\sqrt[{\mathrm R}]I$ if and only if there exist finitely
many polynomials $Q_1, \ldots, Q_s \in \rbb[X_1, \ldots,
X_n]$ and an integer $m \Ge 0$ such that $P^{2m} + Q_1^2 +
\dots + Q_s^2 \in I$.
   \end{lem}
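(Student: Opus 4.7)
My plan is to introduce the auxiliary set
\[
J := \bigl\{P \in \rbb[X_1,\ldots,X_n] : P^{2m} + Q_1^2 + \cdots + Q_s^2 \in I \text{ for some } m \Ge 0 \text{ and some } Q_1,\ldots,Q_s\bigr\}
\]
and to prove that $J$ is the smallest real ideal containing $I$, and moreover coincides with $\sqrt[{\mathrm R}]I$; both assertions of the lemma follow. The inclusion $I \subset J$ is immediate by taking $m=1$ with no $Q_i$'s, and multiplicative closure is handled by the identity $(RP)^{2m} + \sum_i (R^m Q_i)^2 = R^{2m}\bigl(P^{2m} + \sum_i Q_i^2\bigr)$. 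The delicate step is additive closure: given $a^{2m} + \sum_i p_i^2 \in I$ and $b^{2m} + \sum_j q_j^2 \in I$ (with a common $m$, achieved after multiplying the weaker hypothesis by a suitable even power and absorbing the extra factor into the squares), I would expand
\[
(a+b)^{2L} + (a-b)^{2L} = 2 \sum_{k=0}^{L} \binom{2L}{2k} a^{2k} b^{2L-2k}
\]
for $L \Ge 2m-1$; every index $k$ then satisfies $k \Ge m$ or $L-k \Ge m$, so the corresponding term is rewritten, via $a^{2m} \equiv -\sum_i p_i^2 \pmod{I}$ or the analogous relation for $b$, as a negative sum of squares modulo $I$. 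Collecting the positive-integer weights and absorbing the square $(a-b)^{2L}$ into the sum-of-squares side yields $(a+b)^{2L} + (\text{sum of squares}) \in I$, so $a+b \in J$.

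Next I would verify $J$ is real. If $r_1^2 + \cdots + r_t^2 \in J$ with exponent $m$, the multinomial expansion of $(r_1^2 + \cdots + r_t^2)^{2m}$ produces $r_\ell^{4m}$ with a positive integer coefficient, while every other monomial is a pure square in the $r_i$'s; rearranging gives $r_\ell^{4m} + (\text{sum of squares}) \in I$ for each $\ell$, hence $r_\ell \in J$. I would then show every real ideal $K$ containing $I$ contains $J$: given $P^{2m} + \sum_i Q_i^2 \in I \subset K$, realness of $K$ applied to $(P^m)^2 + \sum_i Q_i^2 \in K$ yields $P^m \in K$; picking $k$ with $2^k \Ge m$ gives $P^{2^k} = P^m \cdot P^{2^k - m} \in K$, and iteratively applying realness to $P^{2^k} = (P^{2^{k-1}})^2$ descends to $P \in K$.

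Combining these steps establishes that $J$ is the smallest real ideal containing $I$, which is the first assertion of the lemma, while the very definition of $J$ is the second. To identify $J$ with $\sqrt[{\mathrm R}]I$, note that an arbitrary intersection of real ideals is again real (a direct check from the definition), so the intersection $\sqrt[{\mathrm R}]I$ of all real prime ideals containing $I$ is itself a real ideal containing $I$ and therefore contains $J$; conversely, each real prime ideal containing $I$ contains $J$ by the previous paragraph, so $J$ lies in their intersection.

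The principal obstacle is the additive closure of $J$: choosing the exponent $L \Ge 2m-1$ and exploiting the cancellation of odd cross terms in $(a+b)^{2L} + (a-b)^{2L}$ so that every surviving monomial in $a,b$ carries a power high enough in one variable to invoke the defining hypothesis is the sole non-formal ingredient; the remainder is bookkeeping with sums of squares.
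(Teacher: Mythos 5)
The paper itself does not prove this lemma; it simply cites Bochnak--Coste--Roy, so your proposal is supplying a proof ab initio. Most of it is sound and is essentially the standard argument: the additive-closure device $(a+b)^{2L}+(a-b)^{2L}=2\sum_{k}\binom{2L}{2k}a^{2k}b^{2L-2k}$ with $L\Ge 2m-1$, the multinomial expansion showing $J$ is real, and the $2^k$-descent giving minimality of $J$ among real ideals containing $I$ are all correct. These steps do establish that $J$ is the smallest real ideal of $\rbb[X_1,\ldots,X_n]$ containing $I$, and hence that the second assertion of the lemma is equivalent to $J=\sqrt[{\mathrm R}]I$.

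The gap is in the final paragraph, where you claim to identify $J$ with $\sqrt[{\mathrm R}]I$. Both of your sentences there prove only the inclusion $J\subset\sqrt[{\mathrm R}]I$: the first because $\sqrt[{\mathrm R}]I$ is a real ideal containing $I$ and hence contains the smallest such ideal $J$, and the ``conversely'' clause again concludes that $J$ lies in the intersection of the real prime ideals containing $I$, which is the same statement. The reverse inclusion $\sqrt[{\mathrm R}]I\subset J$ is not addressed, and it is the genuinely nontrivial half. One must show that for each $P\notin J$ there exists a \emph{real prime} ideal $\mathfrak p\supset J$ (hence $\supset I$) with $P\notin\mathfrak p$ (equivalently, that a real ideal equals the intersection of the real prime ideals containing it, with the convention that $\sqrt[{\mathrm R}]I$ is the whole ring when there is no such prime). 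The usual route is a Zorn's lemma argument: unions of chains of real ideals avoiding $P$ are real ideals avoiding $P$, so a maximal such ideal $\mathfrak p\supset J$ exists; one then verifies that $\mathfrak p$ is prime, a step which itself reuses the ``$P^{2m}+\sum Q_i^2$'' description of the real closure you built. Without this step the identification $J=\sqrt[{\mathrm R}]I$ is not established, so as written the argument proves the characterization of the smallest real ideal containing $I$ but not the lemma as stated.
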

The following variant of the Real Nullstellensatz
plays a crucial role in the present paper.
   \begin{lem}[\mbox{\cite[Corollary 4.1.8]{b-c-r}}] \label{rn}
If $I$ is an ideal of $\rbb[X_1, \ldots, X_n]$, then
   \begin{align*}
\ideal{\zera{I}} = \sqrt[{\mathrm R}]I.
   \end{align*}
   \end{lem}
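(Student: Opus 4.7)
The plan is to prove the two inclusions separately. The inclusion $\sqrt[{\mathrm R}]I \subset \ideal{\zera{I}}$ follows directly from the two preceding lemmata, while the reverse inclusion is the genuine content of the Real Nullstellensatz and will require real-algebraic input.

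For $\sqrt[{\mathrm R}]I \subset \ideal{\zera{I}}$, I would first verify that $\ideal{\zera{I}}$ is itself a real ideal. Indeed, if $Q_1^2 + \cdots + Q_s^2 \in \ideal{\zera{I}}$, then evaluating at an arbitrary $x \in \zera{I}$ yields $\sum_{i=1}^s Q_i(x)^2 = 0$, which forces every $Q_i(x) = 0$, hence each $Q_i \in \ideal{\zera{I}}$. Combined with the obvious containment $I \subset \ideal{\zera{I}}$ from Lemma \ref{dodat} and the minimality clause of Lemma \ref{realradical}, this gives the desired inclusion.

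For the reverse containment $\ideal{\zera{I}} \subset \sqrt[{\mathrm R}]I$ I would argue by contraposition: given $P \notin \sqrt[{\mathrm R}]I$, construct a point $x \in \zera{I}$ with $P(x) \ne 0$. By Lemma \ref{realradical}, there is a real prime ideal $\mathfrak p$ with $I \subset \mathfrak p$ and $P \notin \mathfrak p$. The quotient $A = \rbb[X_1, \ldots, X_n]/\mathfrak p$ is an integral domain whose fraction field $K$ is formally real (because $\mathfrak p$ is real), so $K$ carries a total ordering and embeds into a real closed field $R$. The images of $X_1, \ldots, X_n$ in $R$ form a tuple $\xi \in R^n$ at which every polynomial in $I$ vanishes but $P$ does not. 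Taking a finite generating set $Q_1, \ldots, Q_m$ of $I$ (available by Hilbert's basis theorem), the first-order sentence asserting the existence of a common zero of $Q_1, \ldots, Q_m$ outside the zero set of $P$ holds over $R$, hence by the Tarski--Seidenberg transfer principle it holds over $\rbb$, producing the required witness.

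The main obstacle is this final descent from $R$ back to $\rbb$. Purely algebraic manipulations with sums of squares can only produce a zero in some real closed extension of $\rbb$, and promoting it to a zero in $\rbb^n$ requires a non-trivial input from real algebraic geometry, namely the Tarski--Seidenberg transfer principle (or, equivalently, the Krivine--Stengle Positivstellensatz). The earlier construction of $\mathfrak p$, $K$, and the embedding into $R$ is routine once Lemma \ref{realradical} is available, but it is this last model-theoretic step that encodes the geometric content of the theorem and that I would expect to be the genuine difficulty.
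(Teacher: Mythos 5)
The paper does not prove this lemma: it is quoted verbatim as \cite[Corollary 4.1.8]{b-c-r}, the Real Nullstellensatz from Bochnak--Coste--Roy, and serves as an external black box for the rest of the argument. There is therefore no in-paper proof to compare your proposal against.

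That said, your sketch is a correct outline of the standard proof, and in fact tracks the argument given in the cited reference. The easy inclusion $\sqrt[{\mathrm R}]I \subset \ideal{\zera{I}}$ is handled exactly right: $\ideal{\zera{I}}$ is a real ideal containing $I$, so minimality from Lemma \ref{realradical} finishes it. For the hard inclusion you correctly identify the two substantive ingredients: (1) from $P \notin \sqrt[{\mathrm R}]I$ extract a real prime $\mathfrak p \supset I$ with $P \notin \mathfrak p$ (this uses the paper's definition of the real radical as an intersection of real primes), pass to the formally real residue field and its real closure $R$ to get a zero of $I$ avoiding $P$ with coordinates in $R$; and (2) descend this witness from $R$ to $\rbb$ using model-completeness of real closed fields (Tarski--Seidenberg), which is legitimate since $\rbb$ is real closed and the relevant sentence has coefficients in $\rbb$. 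Your self-diagnosis that the transfer step is where the geometric content lives is accurate. Two minor points of rigor you would want to fill in a full write-up: justify that a prime ideal is real iff its residue field is formally real (Artin--Schreier), and state explicitly that the sentence being transferred is existential with parameters in $\rbb$, so either model-completeness of RCF or quantifier elimination applies. Neither is a gap in the outline, just detail to supply.
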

A linear functional $\varLambda \colon \rbb[X_1,
\ldots, X_n] \to \rbb$ is said to be {\em positive
definite} if $\varLambda(P^2) \Ge 0$ for every $P \in
\rbb[X_1, \ldots, X_n]$. Note that every positive
definite linear functional $\varLambda \colon
\rbb[X_1, \ldots, X_n] \to \rbb$ satisfies the Schwarz
inequality
   \begin{align} \label{schw}
|\varLambda(PQ)| \Le \varLambda(P^2)^{1/2} \varLambda(Q^2)^{1/2},
\quad P,Q \in \rbb[X_1, \ldots, X_n].
   \end{align}
Given a positive definite linear functional $\varLambda \colon
\rbb[X_1, \ldots, X_n] \to \rbb$, we write
   \begin{align*}
\idealf\varLambda = \{P \in \rbb[X_1, \ldots, X_n]
\colon \varLambda(P^2)=0\}.
   \end{align*}
We now state three basic properties of the set
$\idealf\varLambda$.
   \begin{lem} \label{cor}
If $\varLambda \colon \rbb[X_1, \ldots, X_n] \to \rbb$
is a positive definite linear functional,
then\footnote{\;It follows from (ii) that
$\idealf\varLambda = \sqrt {\idealf\varLambda}$
because $\label{nilradreal} I \subset \sqrt I \subset
\sqrt[{\mathrm R}]I$ for every ideal $I$ of $\rbb[X_1,
\ldots, X_n]$, $\sqrt I$ being the radical of $I$ (see
\cite{hun} for the basic properties of the radical).}
   \begin{enumerate}
   \item[(i)] $\idealf\varLambda$ is the greatest ideal of
$\rbb[X_1, \ldots, X_n]$ contained in the kernel of
$\varLambda$,
   \item[(ii)] $\idealf\varLambda =
\sqrt[{\mathrm R}]{\idealf\varLambda}$,
   \item[(iii)] $\idealf\varLambda$ is a set ideal.
   \end{enumerate}
   \end{lem}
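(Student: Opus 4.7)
My plan is to verify (i) by a direct computation that leans on the Schwarz inequality \eqref{schw}, then deduce (ii) from (i) together with the characterization of real ideals in Lemma \ref{realradical}, and finally obtain (iii) as a one-line consequence of (ii) and the Real Nullstellensatz (Lemma \ref{rn}).

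For (i), I first check that $\idealf\varLambda$ is closed under addition by expanding $\varLambda((P+Q)^2)$ and bounding the cross term via \eqref{schw}; scalar homogeneity is trivial. The only genuine calculation is the ideal property: for $P \in \idealf\varLambda$ and $R \in \rbb[X_1,\ldots,X_n]$ I would write $\varLambda((RP)^2) = \varLambda(P \cdot R^2 P)$ and apply \eqref{schw} to the pair $(P, R^2 P)$ to obtain $0 \Le \varLambda((RP)^2) \Le \varLambda(P^2)^{1/2}\varLambda((R^2 P)^2)^{1/2}=0$, so $RP \in \idealf\varLambda$. The inclusion $\idealf\varLambda \subset \ker\varLambda$ is another Schwarz application, this time to $(P,1)$. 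Maximality among ideals contained in $\ker\varLambda$ is immediate: if $J$ is an ideal with $J \subset \ker\varLambda$ and $P \in J$, then $P^2 \in J \subset \ker\varLambda$, so $\varLambda(P^2)=0$ and $P \in \idealf\varLambda$.

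Part (ii) reduces, by Lemma \ref{realradical}, to showing that $\idealf\varLambda$ is itself real. Assuming $Q_1^2 + \cdots + Q_s^2 \in \idealf\varLambda$, item (i) places this sum in $\ker\varLambda$, hence $\sum_{i=1}^{s}\varLambda(Q_i^2)=0$; each summand being nonnegative forces $\varLambda(Q_i^2)=0$, i.e.\ $Q_i \in \idealf\varLambda$ for every $i$. Part (iii) then falls out immediately: combining (ii) with Lemma \ref{rn} yields $\idealf\varLambda = \sqrt[{\mathrm R}]{\idealf\varLambda} = \ideal{\zera{\idealf\varLambda}}$, exhibiting $\idealf\varLambda$ as a set ideal. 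I expect the only subtle moment to be the Schwarz trick in (i), since this is the mechanism by which $\idealf\varLambda$ absorbs multiplication by arbitrary polynomials; the remaining work is bookkeeping around the already stated lemmas.
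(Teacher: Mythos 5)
Your proof is correct and follows essentially the same route as the paper's: Schwarz with the pair $(P, R^2P)$ for the ideal property (the paper's $\varLambda(P^2Q^4)$ is your $\varLambda((R^2P)^2)$), Schwarz with $(P,1)$ for the inclusion in $\ker\varLambda$, maximality from $P^2 \in J$, realness of $\idealf\varLambda$ via nonnegative summands to get (ii), and Lemma \ref{rn} for (iii). The only cosmetic difference is that the paper condenses the additivity check to the observation that $P \mapsto \varLambda(P^2)^{1/2}$ is a seminorm.
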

   \begin{proof}
(i) Since the mapping $P \mapsto \varLambda(P^2)^{1/2}$ is
a seminorm on $\rbb[X_1, \ldots, X_n]$, we deduce that
$\idealf\varLambda$ is a vector space. Using \eqref{schw},
we get
   \begin{align*}
|\varLambda((PQ)^2)| \Le \varLambda(P^2)^{1/2}
\varLambda(P^2Q^4)^{1/2} = 0, \quad P \in
\idealf\varLambda, \, Q \in \rbb[X_1, \ldots, X_n],
   \end{align*}
which shows that $\idealf\varLambda$ is an ideal of
$\rbb[X_1, \ldots, X_n]$. Applying \eqref{schw} to $P
\in \idealf\varLambda$ and $Q=1$, we see that
$\idealf\varLambda$ is contained in the kernel $\ker
\varLambda$ of $\varLambda$. Moreover, if $J$ is an
ideal of $\rbb[X_1, \ldots, X_n]$ such that $J \subset
\ker \varLambda$, then for every $P \in J$, $P^2 \in J
\subset \ker \varLambda$ and consequently $P \in
\idealf\varLambda$; hence $J \subset
\idealf\varLambda$, which proves (i).

(ii) By Lemma \ref{realradical}, it is enough to show
that the ideal $\idealf\varLambda$ is real. If $Q_1,
\ldots, Q_s \in \rbb[X_1, \ldots, X_n]$ are such that
$Q_1^2 + \dots + Q_s^2 \in \idealf\varLambda$, then by
(i) we have
   \begin{align*}
0 = \varLambda(Q_1^2 + \dots + Q_s^2) =
\varLambda(Q_1^2) + \dots + \varLambda(Q_s^2),
   \end{align*}
which, when combined with positive definiteness of
$\varLambda$, implies that $Q_1, \ldots, Q_s \in
\idealf\varLambda$.

(iii) is a direct consequence of (ii) and Lemma
\ref{rn}.
   \end{proof}
The next lemma constitutes the main tool for our further
investigations.
   \begin{lem} \label{1}
Let $I$ be an ideal of $\rbb[X_1, \ldots, X_n]$ and let
$\varLambda \colon \rbb[X_1, \ldots, X_n] \to \rbb$ be a positive
definite linear functional. Then $\varLambda$ vanishes on $I$ if
and only if it vanishes on $\ideal{\zera{I}}$.
   \end{lem}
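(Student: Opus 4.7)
The easy direction is ``if'': since Lemma \ref{dodat} gives $I\subset \ideal{\zera{I}}$, vanishing on the larger ideal trivially forces vanishing on the smaller one. All the content is in the ``only if'' direction, and my plan is to route through the real radical, using the Real Nullstellensatz (Lemma \ref{rn}) together with the structural properties of $\idealf\varLambda$ established in Lemma \ref{cor}.

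First I would assume $\varLambda$ vanishes on $I$, i.e. $I\subset\ker\varLambda$. By Lemma \ref{cor}(i) the set $\idealf\varLambda$ is the \emph{greatest} ideal contained in $\ker\varLambda$, so $I\subset \idealf\varLambda$. The next step is to upgrade this from just an ideal containment to one where the right-hand side is known to be real: Lemma \ref{cor}(ii) tells us $\idealf\varLambda=\sqrt[{\mathrm R}]{\idealf\varLambda}$, so $\idealf\varLambda$ is a real ideal containing $I$.

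Now I invoke the first half of Lemma \ref{realradical}, which characterizes $\sqrt[{\mathrm R}]{I}$ as the \emph{smallest} real ideal containing $I$. Combined with the previous step, this gives
\begin{align*}
\sqrt[{\mathrm R}]{I}\subset \idealf\varLambda.
\end{align*}
Finally, the Real Nullstellensatz (Lemma \ref{rn}) identifies $\ideal{\zera{I}}=\sqrt[{\mathrm R}]{I}$, so $\ideal{\zera{I}}\subset \idealf\varLambda\subset \ker\varLambda$, i.e.\ $\varLambda$ vanishes on $\ideal{\zera{I}}$, as required.

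\textbf{Main obstacle.} There is no real obstacle left at this stage: the heavy lifting has been done in Lemmata \ref{realradical}, \ref{rn}, and \ref{cor}. The only thing one has to get right is the direction of containments when combining ``greatest ideal inside $\ker\varLambda$'' with ``smallest real ideal containing $I$''; the key observation that makes the argument work is that $\idealf\varLambda$ is simultaneously an ideal inside $\ker\varLambda$ (from part (i) of Lemma \ref{cor}) and a real ideal (from part (ii))—this is precisely what lets the Real Nullstellensatz be applied.
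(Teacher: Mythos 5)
Your proof is correct and essentially the same as the paper's: both directions are handled identically, and the crux is sandwiching $\ideal{\zera{I}}\subset\idealf\varLambda\subset\ker\varLambda$ using Lemma~\ref{cor}(i). The only cosmetic difference is in the middle step: you go via Lemma~\ref{cor}(ii) together with Lemmata~\ref{realradical} and \ref{rn} to get $\ideal{\zera I}=\sqrt[{\mathrm R}]{I}\subset\idealf\varLambda$, whereas the paper uses Lemma~\ref{cor}(iii) (the ``set ideal'' statement, itself derived from (ii) and \ref{rn}) together with Lemma~\ref{dodat} to get $\ideal{\zera I}\subset\ideal{\zera{\idealf\varLambda}}=\idealf\varLambda$ — the same ingredients, just packaged differently.
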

   \begin{proof}  Since $I \subset \ideal{\zera{I}}$,
it suffices to prove the ``only if'' part of the
conclusion. If $\varLambda$ vanishes on $I$, then, by Lemma
\ref{cor}\,(i), we get $I \subseteq \idealf\varLambda$.
This, together with Lemma \ref{dodat} and Lemma \ref{cor}
(iii) and (i), implies that
   \begin{align*}
\ideal{\zera{I}} \subseteq \ideal{\zera{\idealf\varLambda}}
= \idealf\varLambda \subseteq \ker \varLambda,
   \end{align*}
which completes the proof.
   \end{proof}
\section{\label{s2}Types A and B coincide}
A functional $\varLambda \colon \rbb[X_1, \ldots, X_n] \to
\rbb$ is called a {\em moment functional} on a closed
subset $F$ of $\rbb^n$ if there exists a positive Borel
measure $\mu$ on $F$ such that\footnote{\;We tacitly assume
that all polynomials are absolutely integrable with respect
to $\mu$; in particular, $\mu$ has to be finite.}
   \begin{align} \label{lq}
\varLambda(Q) = \int_F Q(x) \text{ d} \mu(x), \quad Q \in
\rbb[X_1, \ldots, X_n].
   \end{align}
A positive Borel measure $\mu$ on $F$ satisfying
\eqref{lq} is called a {\em representing measure} of
$\varLambda$. We shall abbreviate the expression
``moment functional on $\rbb^n$'' simply to ``moment
functional''. A representing measure of a moment
functional may not be unique (cf.\ \cite{b-ch-r}).
Obviously, each moment functional on a closed subset
$F$ of $\rbb^n$ is a moment functional (but not
conversely) and each moment functional is a positive
definite linear functional (but not conversely except
for $n = 1$, cf.\ \cite{b-ch-r,fr,bis2,bis3}).

Consider a polynomial $P \in \rbb[X_1, \ldots, X_n]$.
In view of \cite[Proposition 2.1]{sto}, a moment
functional $\varLambda$ is a moment functional on
$\mathcal Z_P$ if and only if $\varLambda$ vanishes on
the principal ideal $(P)$, or equivalently on the
ideal $\ideal{\mathcal Z_P}$. Hence, each moment
functional on $\mathcal Z_P$ is a positive definite
linear functional vanishing on $\ideal{\mathcal Z_P}$,
and consequently on $(P)$. Following \cite{sto}, we
say that $P$ is of {\em type} {\sf A} (respectively
{\sf B}) if each positive definite linear functional
$\varLambda \colon \rbb[X_1, \ldots, X_n] \to \rbb$
vanishing on $(P)$ (respectively $\ideal{\mathcal
Z_P}$) is a moment functional; any such functional is
automatically a moment functional on $\mathcal Z_P$.
Clearly, each polynomial of type {\sf A} is of type
{\sf B}.
   \begin{thm} \label{popr}
Let $P,Q \in \rbb[X_1, \ldots, X_n]$ be such that $\mathcal Z_P =
\mathcal Z_Q$. Then $P$ is of type {\sf A} if and only if $Q$ is
of type {\sf A}.
   \end{thm}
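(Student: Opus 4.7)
The plan is to show that the class of positive definite linear functionals vanishing on the principal ideal $(P)$ depends only on the zero set $\mathcal Z_P$, not on the particular generator $P$; once this is done, the theorem follows immediately because $\mathcal Z_P = \mathcal Z_Q$ forces the two classes to coincide, and the moment-functional conclusion in the definition of type {\sf A} is imposed on the same class in both cases.

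To carry this out, I would first observe that for any polynomial $P \in \rbb[X_1,\ldots,X_n]$, the zero set of the principal ideal is simply $\zera{(P)} = \mathcal Z_P$, since a point annihilates every multiple of $P$ iff it annihilates $P$. Consequently $\ideal{\zera{(P)}} = \ideal{\mathcal Z_P}$. Now invoke Lemma \ref{1} with $I = (P)$: a positive definite linear functional $\varLambda \colon \rbb[X_1,\ldots,X_n] \to \rbb$ vanishes on $(P)$ if and only if it vanishes on $\ideal{\mathcal Z_P}$. The same reasoning applied to $Q$ shows that $\varLambda$ vanishes on $(Q)$ if and only if it vanishes on $\ideal{\mathcal Z_Q}$. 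The hypothesis $\mathcal Z_P = \mathcal Z_Q$ yields $\ideal{\mathcal Z_P} = \ideal{\mathcal Z_Q}$, so the set of positive definite linear functionals vanishing on $(P)$ coincides with the set vanishing on $(Q)$.

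Having identified these two classes of functionals, the equivalence of type {\sf A} for $P$ and for $Q$ is automatic from the definition: the statement ``every positive definite linear functional vanishing on $(P)$ is a moment functional'' is now word-for-word the same as the corresponding statement for $Q$. There is essentially no obstacle here beyond recognizing that Lemma \ref{1} is tailor-made to strip the dependence on the generator and replace $(P)$ by the set ideal $\ideal{\mathcal Z_P}$, which depends only on $\mathcal Z_P$.
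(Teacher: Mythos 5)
Your proof is correct and follows essentially the same route as the paper: both arguments reduce vanishing on the principal ideal $(P)$ to vanishing on $\ideal{\mathcal Z_P}$ via Lemma \ref{1}, then use $\ideal{\mathcal Z_P}=\ideal{\mathcal Z_Q}$. Your formulation, which isolates the observation that the two classes of positive definite functionals are literally the same set, is a slightly cleaner way of packaging the same idea.
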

   \begin{proof}
Suppose that $P$ is of type {\sf A}. Set $I = (P)$ and
$J=(Q)$. Then
   \begin{align} \label{ij}
\ideal{\zera I} = \ideal{\mathcal Z_P} =
\ideal{\mathcal Z_Q} =\ideal{\zera J}.
   \end{align}
If a positive definite linear functional $\varLambda
\colon \rbb[X_1, \ldots, X_n] \to \rbb$ vanishes on
$J$, then by \eqref{ij} and Lemma \ref{1}, the
functional $\varLambda$ vanishes on $\ideal{\zera I}
\supset I$. Since $P$ is of type {\sf A}, $\varLambda$
is a moment functional. Hence $Q$ is of type {\sf A}.
By symmetry the proof is complete.
   \end{proof}
Let us recall the original definitions of algebraic sets
of types {\sf A} and {\sf B} from~ \cite{sto}.
   \begin{dfn*}
An algebraic subset $V$ of $\rbb^n$ induced by a
polynomial $P \in \rbb[X_1, \ldots, X_n]$ is of {\em
type} {\sf A} $($respectively {\sf B}$)$ if $P$ is of
type {\sf A} $($respectively {\sf B}$)$.
   \end{dfn*}
Theorem \ref{popr} implies that the definition of algebraic
sets of type {\sf A} is correct; that is, it shows that the
meaning of the statement ``$V$ is of type $\sf A$'' is
independent of the choice of a polynomial $P$ which induces
the algebraic set $V$. This answers in the affirmative a
question posed in \cite[Section 4]{sto}. Certainly, the
definition of type {\sf B} is correct.

The next theorem is an immediate consequence of Lemma
\ref{1} (with $I = (P)$). It answers in the affirmative
another question posed in \cite[Section 4]{sto}.
   \begin{thm} \label{A=B}
An algebraic subset of $\rbb^n$ is of type {\sf A} if
and only if it is of type~ {\sf B}.
   \end{thm}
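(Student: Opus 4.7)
The plan is to use Lemma~\ref{1} applied to the principal ideal $I=(P)$, where $P$ is any polynomial inducing the algebraic set $V=\mathcal Z_P$. One direction is trivial because $(P)\subseteq \ideal{\mathcal Z_P}$, so every positive definite linear functional vanishing on $\ideal{\mathcal Z_P}$ automatically vanishes on $(P)$; hence if $V$ is of type~{\sf A} (equivalently, if $P$ is of type~{\sf A}) then $V$ is of type~{\sf B}.

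For the nontrivial direction, I would fix a polynomial $P\in\rbb[X_1,\ldots,X_n]$ with $V=\mathcal Z_P$ and assume $P$ is of type~{\sf B}. Given any positive definite linear functional $\varLambda\colon\rbb[X_1,\ldots,X_n]\to\rbb$ vanishing on $(P)$, I would apply Lemma~\ref{1} with $I=(P)$: since $\varLambda$ vanishes on $I$, it vanishes on $\ideal{\zera{I}}$. But $\zera{(P)}=\mathcal Z_P$, so $\ideal{\zera{I}}=\ideal{\mathcal Z_P}$. Therefore $\varLambda$ vanishes on $\ideal{\mathcal Z_P}$, and the type~{\sf B} hypothesis forces $\varLambda$ to be a moment functional. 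This establishes that $P$ is of type~{\sf A}, hence $V$ is of type~{\sf A}, which (by Theorem~\ref{popr}, guaranteeing the notion is well defined) completes the equivalence.

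There is really no obstacle here beyond recognising that Lemma~\ref{1} is tailor-made for this equivalence: it upgrades vanishing on the principal ideal $(P)$ to vanishing on the a priori larger set ideal $\ideal{\mathcal Z_P}$, which is exactly the gap between the definitions of type~{\sf A} and type~{\sf B}. All the real work has been absorbed into Lemma~\ref{1}, which in turn rests on Lemma~\ref{cor} and the Real Nullstellensatz (Lemma~\ref{rn}); once those are in place, Theorem~\ref{A=B} is a one-line corollary.
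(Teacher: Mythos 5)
Your proposal is correct and matches the paper's own proof exactly: the paper states that Theorem~\ref{A=B} is an immediate consequence of Lemma~\ref{1} applied with $I=(P)$, which is precisely the reduction you carry out. The only elaboration you add — spelling out why $\zera{(P)}=\mathcal Z_P$ and invoking Theorem~\ref{popr} for well-definedness — is implicit in the paper and does no harm.
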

   \section{\label{s3}Complexification}
In a recent paper \cite{c-s-sz}, types {\sf A} and
{\sf B} have been ``complexified''. The reason for
this modification comes from the spectral theory of
Hilbert space operators which is well developed in the
complex case. We now discuss this in more detail.

In what follows, members of $\pcal_n := \cbb[X_1,
\ldots,X_n]$ and $\rcal_n := \rbb[X_1, \ldots,X_n]$ will be
identified with complex and real polynomial functions on
$\rbb^n$ respectively. The set $\pcal_n$ is a $*$-algebra
with involution $(P+\I Q)^* := P- \I Q$ for $P,Q \in
\rcal_n$. We say that a complex linear functional
$\varLambda\colon \pcal_n \to \cbb$ is {\em positive
definite} if $\varLambda(P^*P) \Ge 0$ for all $P \in
\pcal_n$. If $\varLambda$ is a positive definite complex
linear functional on $\pcal_n$, then
   \begin{align*}
\varLambda(P^*)=\overline{\varLambda(P)}, \quad P \in
\pcal_n \quad (\text{equivalently: }
\varLambda(\rcal_n) \subset \rbb),
   \end{align*}
and so $\varLambda_{\mathsf r}:= \varLambda|_{\rcal_n}$ is
a positive definite real linear functional on $\rcal_n$.
The mapping $\varLambda \mapsto \varLambda_\mathsf r$ is a
bijection between the set of all positive definite complex
linear functionals on $\pcal_n$ and the set of all positive
definite real linear functionals on $\rcal_n$. Given a
positive definite complex linear functional $\varLambda$ on
$\pcal_n$ and $Z \subset \rbb^n$, we write
   \begin{align}  \label{2w1}
   \begin{aligned}
\idealfc\varLambda & = \{P \in \pcal_n \colon
\varLambda(P^*P)=0\},
   \\
\idealc{Z} & = \{P \in \pcal_n\colon P(x)=0 \text{ for
every } x \in Z\}.
   \end{aligned}
   \end{align}
   Moreover, we denote by $(P)_\mathsf{c}$ the ideal
of $\pcal_n$ generated by $P\in \pcal_n$. We point out
that the sets $\idealf\varLambda$, $\ideal{Z}$ and
$(P)$ defined in Section \ref{s1} are ideals of
$\rcal_n$, the first two being real, and that the sets
$\idealfc\varLambda$ and $\idealc{Z}$ defined in
\eqref{2w1} are ideals of $\pcal_n$ that may not be
real ($\pcal_n$ is the only real ideal of $\pcal_n$).
The following simple fact is stated without proof (we
write $\mathcal Z_P = \{x \in \rbb^n \colon P(x)=0\}$
also for $P \in \pcal_n$).
   \begin{lem} \label{12.XII.06}
Let $P \in \pcal_n$. A positive definite complex
linear functional $\varLambda$ on $\pcal_n$ vanishes
on $(P)_\mathsf c$ $($respectively $\idealc{\mathcal
Z_P}$$)$ if and only if $\varLambda_\mathsf{r}$
vanishes on $(P^*P)$ $($respectively $\ideal{\mathcal
Z_P}$$)$. Moreover, if $P \in \rcal_n$, then $(P^*P)$
can be replaced in the above equivalence by $(P)$.
   \end{lem}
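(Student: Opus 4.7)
The plan is to handle the two equivalences separately, each by decomposing $\pcal_n = \rcal_n + \I\,\rcal_n$ and exploiting the complex linearity of $\varLambda$, and then to deduce the ``moreover'' clause. The underlying observation I would use repeatedly is that $\varLambda(R_1 + \I R_2) = \varLambda_{\mathsf r}(R_1) + \I\, \varLambda_{\mathsf r}(R_2)$ for $R_1,R_2 \in \rcal_n$, so that vanishing of $\varLambda_{\mathsf r}$ on an $\rcal_n$-subset automatically propagates to its complex span.

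For the equivalence involving $(P)_\mathsf{c}$ and $(P^*P)$, the direction $\Rightarrow$ is immediate: for any $Q \in \rcal_n$ the element $P^*Q$ lies in $\pcal_n$, so $\varLambda(P^*PQ) = \varLambda(P \cdot P^*Q) = 0$. For $\Leftarrow$, I would first upgrade the assumption by complex linearity to $\varLambda(P^*PR) = 0$ for every $R \in \pcal_n$, and then apply the complex Schwarz inequality
\[
|\varLambda(PQ)|^2 \Le \varLambda(1)\, \varLambda\bigl((PQ)^*(PQ)\bigr) = \varLambda(1)\, \varLambda(P^*P \cdot Q^*Q),
\]
whose right-hand side vanishes because $Q^*Q \in \pcal_n$. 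This forces $\varLambda(PQ)=0$ for every $Q \in \pcal_n$, i.e.\ $\varLambda$ vanishes on $(P)_\mathsf c$.

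The equivalence involving $\idealc{\mathcal Z_P}$ and $\ideal{\mathcal Z_P}$ I would reduce to the identity $\idealc{\mathcal Z_P} = \ideal{\mathcal Z_P} + \I\,\ideal{\mathcal Z_P}$, valid because a complex polynomial vanishes on $\mathcal Z_P \subset \rbb^n$ iff both its real and imaginary parts do. Granting this, both implications follow at once from the linearity identity above. For the ``moreover'' clause, when $P \in \rcal_n$ one has $P^*P = P^2 \in (P)$, so vanishing of $\varLambda_{\mathsf r}$ on $(P)$ implies vanishing on $(P^*P)$; conversely, if $\varLambda$ vanishes on $(P)_\mathsf c$ then $\varLambda_{\mathsf r}(PR) = \varLambda(PR) = 0$ for every $R \in \rcal_n$, so $\varLambda_{\mathsf r}$ vanishes on $(P)$.

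The only non-routine step is the Schwarz-inequality manoeuvre in the $\Leftarrow$ direction of the principal-ideal equivalence; everything else is bookkeeping about real and imaginary parts. That step is also exactly what explains why one must work with $(P^*P)$, rather than $(P)$, on the real side when $P$ is not real: complex linearity alone does not bridge $(P^*P)$ and $(P)_\mathsf{c}$, and the passage from $P^*P$ to $P$ genuinely requires the positive-definiteness of $\varLambda$ via Schwarz.
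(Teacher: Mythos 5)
Your proof is correct. The paper explicitly states this lemma ``without proof,'' so there is no authorial argument to compare against; but the argument you supply is exactly the kind of routine verification the authors must have had in mind, and every step holds up. In particular, the $\Leftarrow$ direction of the principal-ideal equivalence genuinely needs the Schwarz manoeuvre you describe: from $\varLambda_{\mathsf r}|_{(P^*P)}=0$ one first upgrades by complex linearity to $\varLambda(P^*PR)=0$ for all $R\in\pcal_n$ (valid since $P^*P\in\rcal_n$, so $P^*PR_1, P^*PR_2\in\rcal_n$ when $R=R_1+\I R_2$ with $R_j\in\rcal_n$), and then for any $Q\in\pcal_n$ the Schwarz inequality with $a=1$, $b=PQ$ gives $|\varLambda(PQ)|^2\Le\varLambda(1)\,\varLambda(Q^*P^*PQ)=\varLambda(1)\,\varLambda\bigl(P^*P\cdot Q^*Q\bigr)=0$, whence $\varLambda$ annihilates $(P)_{\mathsf c}$. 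The decomposition $\idealc{\mathcal Z_P}=\ideal{\mathcal Z_P}+\I\,\ideal{\mathcal Z_P}$ is correct because elements of $\mathcal Z_P$ lie in $\rbb^n$, so a complex polynomial vanishes there iff its real and imaginary parts both do; this is the same identity the paper invokes for general subsets of $\rbb^n$ in the proofs of Lemmata \ref{11.XII.06} and \ref{8.10.2007}. The treatment of the ``moreover'' clause is also sound: $P^2\in(P)$ gives one direction via the already-proved equivalence, while the other is immediate from $\varLambda_{\mathsf r}(PR)=\varLambda(PR)=0$ for $R\in\rcal_n$. Your closing remark correctly identifies why $(P^*P)$ rather than $(P)$ must appear on the real side for non-real $P$: it is the positive definiteness, through Schwarz, that bridges the two.
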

   We now formulate ``complexified'' variants of
Lemmata \ref{cor} and \ref{1} (see also Appendix for
the complex counterparts of Lemmata \ref{realradical}
and \ref{rn}).
   \begin{lem} \label{11.XII.06}
If $\varLambda$ is a positive definite complex linear
functional on $\pcal_n$, then
   \begin{enumerate}
   \item[(i)] $\idealfc\varLambda$ is the greatest ideal of
$\pcal_n$ contained in the kernel of $\varLambda$,
   \item[(ii)] $\idealfc\varLambda$ is a set ideal
$($i.e.\ $\idealfc\varLambda = \idealc{Z}$ for some $Z
\subset \rbb^n$$)$.
   \end{enumerate}
   \end{lem}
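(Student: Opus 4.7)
The plan is to follow the same pattern as the proof of Lemma \ref{cor}, but to use either the complex Schwarz inequality for positive definite functionals on $\pcal_n$ or, for part (ii) where the real--radical argument is unavailable (since $\pcal_n$ has no proper real ideal), the correspondence $\varLambda \leftrightarrow \varLambda_\mathsf{r}$ between complex and real positive definite functionals introduced in Section \ref{s3}.

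For part (i) I would first record the complex Schwarz inequality $|\varLambda(P^*Q)|^2 \le \varLambda(P^*P)\,\varLambda(Q^*Q)$, which follows from positive definiteness in the standard way. From this the map $P \mapsto \varLambda(P^*P)^{1/2}$ is a seminorm on $\pcal_n$, so $\idealfc\varLambda$ is a linear subspace. To see it is an ideal, take $P \in \idealfc\varLambda$ and $Q \in \pcal_n$; using commutativity one has $(PQ)^*(PQ) = P^*\cdot(PQ^*Q)$, and Schwarz gives
\[
|\varLambda((PQ)^*(PQ))|^2 \le \varLambda(P^*P)\,\varLambda((PQ^*Q)^*(PQ^*Q)) = 0.
\]
Applying Schwarz with $Q=1$ shows $\idealfc\varLambda \subset \ker\varLambda$. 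Finally, if $J$ is any ideal of $\pcal_n$ with $J \subset \ker\varLambda$ and $P \in J$, then $P^*P \in J \subset \ker\varLambda$, so $P \in \idealfc\varLambda$; hence $J \subset \idealfc\varLambda$, proving maximality.

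For part (ii) I would exploit the real/complex splitting described in Section \ref{s3}. Writing $P = P_1 + \I P_2$ with $P_1, P_2 \in \rcal_n$, one computes
\[
P^*P = (P_1 - \I P_2)(P_1 + \I P_2) = P_1^2 + P_2^2,
\]
so $\varLambda(P^*P) = \varLambda_\mathsf{r}(P_1^2) + \varLambda_\mathsf{r}(P_2^2)$. Since both summands are nonnegative, this vanishes iff $P_1, P_2 \in \idealf{\varLambda_\mathsf{r}}$. Therefore
\[
\idealfc\varLambda = \idealf{\varLambda_\mathsf{r}} + \I\,\idealf{\varLambda_\mathsf{r}}.
\]
By Lemma \ref{cor}(iii) there exists $Z \subset \rbb^n$ with $\idealf{\varLambda_\mathsf{r}} = \ideal{Z}$. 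The same real/imaginary-part argument shows directly that $\idealc{Z} = \ideal{Z} + \I\,\ideal{Z}$, because a polynomial $P_1 + \I P_2$ with $P_1, P_2 \in \rcal_n$ vanishes at every $x \in Z$ iff $P_1$ and $P_2$ do separately. Combining the two identities yields $\idealfc\varLambda = \idealc{Z}$, so $\idealfc\varLambda$ is a set ideal.

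The main obstacle is purely conceptual: one cannot simply mimic the real-radical route used in Lemma \ref{cor}(ii) because in $\pcal_n$ the only real ideal is the whole algebra. The workaround above is elementary, but it is the point that requires care; everything else is a direct transcription of the real argument with $P^2$ replaced by $P^*P$.
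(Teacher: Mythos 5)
Your proof is correct and follows the same route as the paper: for part (ii) you spell out the same identities $\idealfc\varLambda = \idealf{\varLambda_\mathsf{r}} + \I\,\idealf{\varLambda_\mathsf{r}} = \ideal{Z} + \I\,\ideal{Z} = \idealc{Z}$ that the paper states without detail, and for part (i) you supply the direct Schwarz-inequality argument where the paper simply cites \cite[pages 24 and 42]{c-s-sz}.
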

   \begin{proof}
(i) is proved in \cite[pages 24 and 42]{c-s-sz}.

(ii) By Lemma \ref{cor}, there exists $Z \subset
\rbb^n$ such that $\idealf{\varLambda_\mathsf r} =
\ideal{Z}$. This in turn implies that
$\idealfc\varLambda = \idealf{\varLambda_\mathsf r} +
\I\, \idealf{\varLambda_\mathsf r} = \ideal{Z} + \I \,
\ideal{Z} = \idealc{Z}$.
   \end{proof}
   \begin{lem} \label{8.10.2007}
If $I$ is a $*$-ideal of $\pcal_n$ and $\varLambda$ is
a positive definite complex linear functional on
$\pcal_n$, then $\varLambda$ vanishes on $I$ if and
only if it vanishes on $\idealc{\zera{I}}$, where
$\zera I$ is defined in \eqref{zera}.
   \end{lem}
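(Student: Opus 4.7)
The plan is to imitate the proof of Lemma \ref{1} verbatim, substituting the complex ingredients: Lemma \ref{11.XII.06} in place of Lemma \ref{cor}, and the obvious $\pcal_n$-analogue of Lemma \ref{dodat} (the inclusions $I \subseteq \idealc{\zera I}$ and $Z \subseteq \zera{\idealc Z}$, together with the contravariance of $\zera\cdot$ and $\idealc\cdot$) in place of Lemma \ref{dodat} itself; these analogues follow at once from \eqref{2w1} and \eqref{zera}.

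The ``if'' direction is then immediate from $I \subseteq \idealc{\zera I}$. For the ``only if'' direction, assume $\varLambda$ vanishes on $I$. The first step is to show $I \subseteq \idealfc\varLambda$. This is precisely where the hypothesis that $I$ is a $*$-ideal enters: given $P \in I$, also $P^* \in I$, hence $P^*P \in I \subseteq \ker\varLambda$, so $\varLambda(P^*P)=0$, i.e., $P \in \idealfc\varLambda$. The second step invokes Lemma \ref{11.XII.06}(ii) to choose $Z \subseteq \rbb^n$ with $\idealfc\varLambda = \idealc{Z}$. Then $I \subseteq \idealc{Z}$ gives $Z \subseteq \zera{\idealc Z} \subseteq \zera I$, and applying $\idealc\cdot$ (which reverses inclusions) yields
\begin{align*}
\idealc{\zera I} \subseteq \idealc{Z} = \idealfc\varLambda \subseteq \ker\varLambda,
\end{align*}
finishing the proof.

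The only place where the argument requires genuine care is the implication $I \subseteq \ker\varLambda \Rightarrow I \subseteq \idealfc\varLambda$, and this is where the $*$-ideal assumption is indispensable. In the real setting of Lemma \ref{1} this passage was automatic because $P \in I$ trivially forces $P^2 = P \cdot P \in I$, and positivity of $\varLambda$ then gives $\varLambda(P^2)=0$; in the complex setting $P^*P$ need not lie in an arbitrary ideal containing $P$, so without stability of $I$ under the involution the Cauchy--Schwarz information \eqref{schw} cannot be brought to bear on $P$. The $*$-ideal hypothesis is exactly what restores this step, after which the rest of the proof is purely formal chasing of the containments above.
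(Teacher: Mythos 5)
Your proof is correct, but it takes a genuinely different route from the paper's. The paper reduces to the real case: writing $I_\mathsf{r} := \{P \in I : P^* = P\}$, the $*$-ideal hypothesis yields the decomposition $I = I_\mathsf{r} + \I\, I_\mathsf{r}$, from which $\varLambda|_I = 0$ is equivalent to $\varLambda_\mathsf{r}|_{I_\mathsf{r}} = 0$; Lemma \ref{1} then applies verbatim to the real ideal $I_\mathsf{r}$, and the identity \eqref{plus2} transports the conclusion back to $\pcal_n$. You instead re-run the entire argument of Lemma \ref{1} internally in $\pcal_n$, substituting Lemma \ref{11.XII.06} for Lemma \ref{cor}. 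Both routes work; yours is arguably more self-contained since it avoids \eqref{plus2} and the passage through $\rcal_n$.

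However, your commentary on where the $*$-ideal hypothesis ``is indispensable'' is wrong. You assert that in the complex setting $P^*P$ need not lie in an arbitrary ideal containing $P$. This is false: $\pcal_n$ is commutative, so if $I$ is any ideal (not necessarily a $*$-ideal) and $P \in I$, then $P^*P = P^* \cdot P \in I$ simply because ideals absorb multiplication. Thus the implication $I \subseteq \ker \varLambda \Rightarrow I \subseteq \idealfc{\varLambda}$ requires only that $I$ be an ideal — which is also precisely what Lemma \ref{11.XII.06}\,(i) hands you directly, since $\idealfc{\varLambda}$ is the greatest ideal contained in $\ker\varLambda$. Your detour through $P^* \in I$ is harmless but superfluous, and in fact your argument never truly uses that $I$ is a $*$-ideal. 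The hypothesis is genuinely used only in the paper's approach, where it is what makes $I = I_\mathsf{r} + \I\,I_\mathsf{r}$ valid.
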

   \begin{proof}
The set $I_\mathsf r := \{P \in I \colon P^*=P\}$ is
an ideal of $\rcal_n$ and $I = I_\mathsf r + \I \,
I_\mathsf r$. Since $\zera{I} = \zera{I_\mathsf r}$
and $\idealc{Z} = \ideal{Z} + \I \, \ideal{Z}$ for
each subset $Z$ of $\rbb^n$, we get
   \begin{align}   \label{plus2}
\idealc{\zera{I}} = \ideal{\zera{I_\mathsf r}} + \I \,
\ideal{\zera{I_\mathsf r}}.
   \end{align}
Thus, we have
   \begin{align*}
\varLambda|_I = 0 \iff \varLambda_\mathsf
r|_{I_\mathsf r} = 0 \overset{\textrm{Lemma \ref{1}}}
\iff \varLambda_\mathsf r|_{\ideal{\zera{I_\mathsf
r}}} = 0 \overset{ \eqref{plus2} }\iff
\varLambda|_{\idealc{\zera{I}}} = 0,
   \end{align*}
   which completes the proof.
   \end{proof}
Take $P \in \pcal_n$. Following \cite[Section 10]{c-s-sz},
we say that $P\in \pcal_n$ is of {\em type} {\sf A}
(respectively {\sf B}) if each positive definite complex
linear functional $\varLambda$ on $\pcal_n$ vanishing on
$(P)_\mathsf{c}$ (respectively $\idealc{\mathcal Z_P}$) is
a moment functional. Clearly, each polynomial of type {\sf
A} is of type {\sf B}. By Lemma \ref{12.XII.06}, the
``complexified'' definitions of types {\sf A} and {\sf B}
coincide with the ``real'' ones for every $P \in \rcal_n$.
Applying again Lemma \ref{12.XII.06}, we see that $P \in
\pcal_n$ is of type {\sf A} (respectively {\sf B}) if and
only if $P^*P$ is of type {\sf A} (respectively {\sf B}).
This fact and the equality $\mathcal Z_S = \mathcal
Z_{S^*S}$, $S \in \pcal_n$, imply that Theorem \ref{popr}
remains valid for all $P,Q \in \pcal_n$.
   \begin{thm} \label{popr+}
Let $P,Q \in \pcal_n$ be such that $\mathcal Z_P = \mathcal
Z_Q$. Then $P$ is of type {\sf A} if and only if $Q$ is of
type {\sf A}.
   \end{thm}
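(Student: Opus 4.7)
The plan is to reduce Theorem \ref{popr+} to the already-established real version (Theorem \ref{popr}) via the self-adjoint polynomials $P^*P$ and $Q^*Q$. Two observations have already been recorded in the text immediately preceding the statement, and they do all the work: first, the equivalence ``$P \in \pcal_n$ is of type {\sf A} iff $P^*P$ is of type {\sf A}'' (a consequence of Lemma \ref{12.XII.06}), and second, the identity $\mathcal Z_S = \mathcal Z_{S^*S}$ for every $S\in\pcal_n$.

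Concretely, I would argue as follows. By symmetry, assume $P$ is of type {\sf A}. Writing $P = P_1 + \I P_2$ and $Q = Q_1 + \I Q_2$ with $P_j, Q_j \in \rcal_n$, one has $P^*P = P_1^2 + P_2^2 \in \rcal_n$ and $Q^*Q = Q_1^2 + Q_2^2 \in \rcal_n$. From $\mathcal Z_P = \mathcal Z_Q$ and $\mathcal Z_S = \mathcal Z_{S^*S}$ applied to $S = P$ and $S = Q$, we obtain
\begin{align*}
\mathcal Z_{P^*P} = \mathcal Z_P = \mathcal Z_Q = \mathcal Z_{Q^*Q}.
\end{align*}
The bridge equivalence tells us that $P^*P$ is of type {\sf A} (in the real sense, since $P^*P \in \rcal_n$). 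Now Theorem \ref{popr}, applied to the real polynomials $P^*P$ and $Q^*Q$ which induce the same algebraic set, yields that $Q^*Q$ is of type {\sf A}. Invoking the bridge equivalence once more, this time in the direction $Q^*Q$ of type {\sf A} $\Longrightarrow$ $Q$ of type {\sf A}, finishes the argument.

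There is no real obstacle here: the proof is essentially a bookkeeping step. The only point worth double-checking is that the equivalence ``$P$ is of type {\sf A} iff $P^*P$ is of type {\sf A}'' and the identity $\mathcal Z_P = \mathcal Z_{P^*P}$ are used in both directions, so that symmetry can genuinely be invoked; both hold for arbitrary $P \in \pcal_n$, so the reduction is legitimate and the proof reduces verbatim to one application of Theorem \ref{popr}.
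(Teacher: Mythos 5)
Your proof is correct and follows exactly the paper's route: the paper derives Theorem \ref{popr+} from the bridge equivalence ``$P$ is of type {\sf A} iff $P^*P$ is of type {\sf A}'' (via Lemma \ref{12.XII.06}), the identity $\mathcal Z_S = \mathcal Z_{S^*S}$, and an appeal to Theorem \ref{popr}, which is precisely your argument. Your added observation that the bridge equivalence and the set identity hold for all $P\in\pcal_n$, so the symmetry is genuine, is a sound and useful sanity check, though the paper leaves it implicit.
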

Knowing this, we can define correctly type {\sf A} for an
algebraic subset of $\rbb^n$ induced by a polynomial $P \in
\pcal_n$. Finally, we see that Theorem \ref{A=B} remains
valid in the ``complexified'' setting as well.
   \begin{thm} \label{A=B+}
An algebraic subset of $\rbb^n$ induced by a polynomial $P
\in \pcal_n$ is of type {\sf A} if and only if it is of
type~ {\sf B}.
   \end{thm}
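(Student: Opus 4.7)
The plan is to reduce the complex statement to the real one (Theorem \ref{A=B}) via Lemma \ref{12.XII.06}.

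First, I would dispatch the easy direction: type {\sf A} implies type {\sf B}. Since $P$ vanishes on $\mathcal Z_P$, we have $(P)_\mathsf c \subset \idealc{\mathcal Z_P}$, so every positive definite complex linear functional vanishing on $\idealc{\mathcal Z_P}$ automatically vanishes on $(P)_\mathsf c$ and is therefore a moment functional by the type {\sf A} assumption.

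For the converse, assume the algebraic set $V = \mathcal Z_P$ is of type {\sf B} and let $\varLambda$ be an arbitrary positive definite complex linear functional on $\pcal_n$ vanishing on $(P)_\mathsf c$. By the type {\sf B} hypothesis, it is enough to show that $\varLambda$ vanishes on $\idealc{\mathcal Z_P}$. Lemma \ref{12.XII.06} translates the hypothesis $\varLambda|_{(P)_\mathsf c} = 0$ into the real statement $\varLambda_{\mathsf r}|_{(P^*P)} = 0$, where $(P^*P) \subset \rcal_n$ because $P^*P \in \rcal_n$. Noting that $\mathcal Z_{P^*P} = \mathcal Z_P$, applying Lemma \ref{1} to the real ideal $I = (P^*P)$ yields $\varLambda_{\mathsf r}|_{\ideal{\mathcal Z_P}} = 0$. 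A second application of Lemma \ref{12.XII.06} (this time to $\idealc{\mathcal Z_P}$) transports this back to $\varLambda|_{\idealc{\mathcal Z_P}} = 0$, which is exactly what we needed.

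The argument is really just bookkeeping once Lemma \ref{1} is in hand, so there is no single hard step. The only genuine subtlety to watch for is that $(P)_\mathsf c$ need not be a $*$-ideal when $P \notin \rcal_n$, which prevents a direct application of Lemma \ref{8.10.2007} to $I = (P)_\mathsf c$; this is precisely why the reduction runs through the real ideal $(P^*P)$ rather than through a complex $*$-ideal. Once this routing is chosen, Theorem \ref{A=B+} drops out as a purely formal consequence of Lemmata \ref{1} and \ref{12.XII.06}, paralleling the way Theorem \ref{A=B} followed at once from Lemma \ref{1} in the real case.
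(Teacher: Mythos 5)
Your proof is correct and follows essentially the same route the paper takes: the paper reduces the complex statement to the real case via Lemma \ref{12.XII.06} (passing through the real ideal $(P^*P)$ and invoking Theorem \ref{A=B}, which itself rests on Lemma \ref{1}), and you simply unfold that reduction and apply Lemma \ref{1} directly. Your parenthetical observation that $(P)_\mathsf{c}$ need not be a $*$-ideal when $P \notin \rcal_n$ — so that Lemma \ref{8.10.2007} cannot be applied to it directly — correctly identifies why the detour through $(P^*P)$ is necessary.
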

   \begin{cor} \label{3czl}
Let $V$ and $W$ be algebraic subsets of $\rbb^n$ such
that $V \subset W$. If $W$ is of type {\sf A}, then so
is $V$.
   \end{cor}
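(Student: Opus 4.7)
The plan is to reduce the corollary to Theorem~\ref{A=B+} (the equivalence of types A and B) combined with the order-reversing behaviour of $\idealc{\cdot}$ under inclusion; once the hypothesis on $W$ is rewritten in its type B form, the conclusion on $V$ drops out of a one-line ideal inclusion.

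First I would pick polynomials $P, Q \in \pcal_n$ with $V = \mathcal Z_P$ and $W = \mathcal Z_Q$ (such polynomials exist, as noted in Section~\ref{s1}). Since $W$ is of type A, Theorem~\ref{A=B+} yields that $W$ is also of type B: every positive definite complex linear functional on $\pcal_n$ vanishing on $\idealc{\mathcal Z_Q} = \idealc{W}$ is a moment functional.

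Next I would use the obvious fact that $V \subset W$ implies $\idealc{W} \subset \idealc{V}$, since any polynomial vanishing on $W$ vanishes in particular on every point of $V$. Therefore, if $\varLambda$ is a positive definite complex linear functional on $\pcal_n$ vanishing on $\idealc{\mathcal Z_P} = \idealc{V}$, it automatically vanishes on $\idealc{W}$, and then the type B property of $W$ forces $\varLambda$ to be a moment functional. This shows that $V$ is of type B, and Theorem~\ref{A=B+} applied in the reverse direction delivers that $V$ is of type A.

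I do not anticipate any serious obstacle: the whole content of the argument is that type A, reformulated as type B, is monotone in $V$ through the order-reversing correspondence between algebraic sets and set ideals. The only point that deserves a moment of care is that one must pass through type B at both ends of the chain; working directly with the principal ideals $(P)_\mathsf c$ and $(Q)_\mathsf c$ would not help, since they bear no canonical relation to one another when $V \subset W$, whereas the set ideals $\idealc{V}$ and $\idealc{W}$ are related by a tautological inclusion.
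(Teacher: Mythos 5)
Your argument is correct and is in essence the same as the paper's: the paper's proof simply cites \cite[Proposition 48]{c-s-sz} (which is the monotonicity of type {\sf B} under set inclusion that you prove inline via $\idealc{W}\subset\idealc{V}$) together with Theorem~\ref{A=B+}. You have just unpacked that citation into a short self-contained argument; the pivot through type {\sf B} is exactly the intended mechanism.
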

   \begin{proof}
Apply \cite[Proposition 48]{c-s-sz} and Theorem \ref{A=B+}.
   \end{proof}
   \section{\label{s4}Type C}
Following \cite[Proposition 42]{c-s-sz}, we say that a
$*$-ideal $I$ of $\pcal_n$ is of {\em type} {\sf C} if
each positive definite complex linear functional
$\varLambda$ on $\pcal_n$ satisfying the equality
$\idealfc\varLambda = I$ is a moment functional. An
algebraic subset $V$ of $\rbb^n$ is of {\em type} {\sf
C} if the ideal $\idealc{V}$ is of type {\sf C}. Each
algebraic set of type {\sf A} is automatically of type
{\sf C}. In Section \ref{s6} we will show that the
converse implication is true as well. In view of the
proof of Lemma \ref{11.XII.06}\,(ii), we see that an
algebraic subset $V$ of $\rbb^n$ is of type {\sf C} if
and only if each positive definite real linear
functional $\varLambda$ on $\rcal_n$ satisfying the
equality $\idealf\varLambda = \ideal V$ is a moment
functional.

We now complete \cite[Proposition 41]{c-s-sz}.
   \begin{pro}\label{wkw}
If $I$ is a proper $*$-ideal of $\pcal_n$, then the
following three conditions are equivalent
   \begin{enumerate}
   \item[(i)] $I$ is  a set ideal,
   \item[(ii)] there exists a positive definite
complex linear functional $\varLambda$ on $\pcal_n$
such that $\idealfc\varLambda = I$,
   \item[(iii)] there exists a rigid
$I$-basis $\{Q_k\}_{k=0}^m$ of $\pcal_n$ composed of
real column polynomials {\em (}with $Q_0=1${\em )} and
a complex linear functional $\varLambda$ on $\pcal_n$
such that $\varLambda(Q_kQ_l^*)$ is the zero matrix
for all $k \neq l$, $\varLambda(Q_kQ_k^*)$ is the
identity matrix for all $k$, and $I \subset \ker
\varLambda$.
   \end{enumerate}
   \end{pro}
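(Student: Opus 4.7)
The strategy is to close a short circle by establishing (ii)$\,\Rightarrow\,$(i) and (i)$\,\Rightarrow\,$(ii) directly, and to quote \cite[Proposition~41]{c-s-sz} for the equivalence (i)$\,\Leftrightarrow\,$(iii); this is precisely how the present proposition ``completes'' the earlier result, by interpolating the middle condition.

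The implication (ii)$\,\Rightarrow\,$(i) is immediate: Lemma \ref{11.XII.06}\,(ii) says that $\idealfc\varLambda$ is always a set ideal, so if $\idealfc\varLambda=I$ then $I$ itself is a set ideal.

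For (i)$\,\Rightarrow\,$(ii), suppose $I=\idealc{Z}$ for some $Z\subset\rbb^n$. Set $V:=\zera{I}$. Since $Z\subset V$, we have $\idealc{V}\subset I$, and the complex analogue of Lemma \ref{dodat} yields $I\subset\idealc{\zera{I}}=\idealc{V}$, so $I=\idealc{V}$. Because $I$ is proper, $V$ is a nonempty closed subset of $\rbb^n$, hence separable; fix a countable dense subset $\{x_k\}_{k=1}^{\infty}\subset V$. Choose weights $c_k>0$ decaying rapidly enough that $\sum_k c_k(1+\|x_k\|^2)^d<\infty$ for every integer $d\Ge 0$ (the explicit choice $c_k:=2^{-k}/(1+\|x_k\|^{2k})$ works, since any polynomial of degree $d$ satisfies $c_k|P(x_k)|^2\Le C_P\,2^{-k}$ for all $k\Ge d$). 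Define
\begin{align*}
\mu:=\sum_{k=1}^{\infty}c_k\,\delta_{x_k},\qquad \varLambda(P):=\int_{\rbb^n}P\D\mu=\sum_{k=1}^{\infty}c_kP(x_k)\quad(P\in\pcal_n).
\end{align*}
Then $\mu$ is a finite positive Borel measure on $\rbb^n$ supported in $V$, so $\varLambda$ is a moment functional, and in particular a positive definite complex linear functional on $\pcal_n$. The chain
\begin{align*}
\varLambda(P^*P)=0\iff \sum_k c_k|P(x_k)|^2=0\iff P|_{\{x_k\}}\equiv 0\iff P|_V\equiv 0\iff P\in\idealc{V}=I,
\end{align*}
in which the penultimate step uses density of $\{x_k\}$ in $V$ together with continuity of $P$, gives $\idealfc\varLambda=I$, as required.

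I do not anticipate any substantial obstacle. The one technical point is securing absolute integrability of every polynomial against the discrete measure $\mu$, which is settled once and for all by the rapid decay of the weights $c_k$; everything else is a direct application of Lemma \ref{11.XII.06}\,(ii) and the separability of subsets of Euclidean space.
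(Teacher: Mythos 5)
Your proof is correct, and it follows essentially the same strategy as the paper, with one part made explicit and one citation swapped. For (ii)$\Rightarrow$(i) you use Lemma \ref{11.XII.06}\,(ii), exactly as the paper does. For (i)$\Rightarrow$(ii), the paper simply points to the proof of \cite[Proposition 41\,(ii)]{c-s-sz}; you instead supply the construction directly, picking a countable dense subset $\{x_k\}$ of $\zera I$ and a discretely supported measure $\mu=\sum_k c_k\delta_{x_k}$ with rapidly decaying weights, and you correctly reduce $I=\idealc Z$ to $I=\idealc{\zera I}$ beforehand so that $\idealfc\varLambda=I$ drops out by density and continuity. This is in substance what the cited proof does --- indeed the paper remarks after the proof that the functional from \cite[Proposition 41\,(ii)]{c-s-sz} is already a moment functional, so your version just makes that step self-contained. (Minor bookkeeping: your weight choice is verified by noting $(1+t)^{d}/(1+t^{2k})\Le 2^{d}$ for all $t\Ge 0$ and $k\Ge d$, which bounds $c_k|P(x_k)|$ by $C_P 2^{-k}$ and gives absolute summability.) Where you genuinely diverge is in handling (iii): you close the triangle via (i)$\Leftrightarrow$(iii), quoting \cite[Proposition 41]{c-s-sz}, whereas the paper establishes (ii)$\Rightarrow$(iii) using \cite[Proposition 32]{c-s-sz} and \cite[Remark 33]{c-s-sz}, and (iii)$\Rightarrow$(ii) using \cite[Theorem 36]{c-s-sz}. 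Both routings are legitimate; the paper's version has the mild advantage of not presupposing exactly which equivalence \cite[Proposition 41]{c-s-sz} already records, but the logical content is the same.
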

   \begin{proof}
(i)$\Rightarrow$(ii) See the proof of
\cite[Proposition 41\,(ii)]{c-s-sz}.

(ii)$\Rightarrow$(i) Apply Lemma
\ref{11.XII.06}\,(ii).

(ii)$\Rightarrow$(iii) Since $\idealfc\varLambda = I$
and the ideal $I$ is proper, the functional
$\varLambda$ must be non-zero. Employing
\cite[Proposition 32\,(i)$\Rightarrow$(ii)]{c-s-sz},
\cite[Remark 33]{c-s-sz} and the fact that
$\idealfc\varLambda \subset \ker \varLambda$, we get
(iii).

(iii)$\Rightarrow$(ii) Use \cite[Theorem 36]{c-s-sz}.
   \end{proof}
In fact, in view of the proof of \cite[Proposition
41\,(ii)]{c-s-sz}, if $I$ is a set ideal of $\pcal_n$,
then there always exists a (complex linear) moment
functional $\varLambda$ on $\pcal_n$ such that
$\idealfc\varLambda = I$ (this is much more than
required in Proposition \ref{wkw}\,(ii)).
   \section{\label{s5}Semiperfect $*$-algebras}
Our next goal is to study a class of $*$-algebras in
which positive definite linear functionals are
automatically moment functionals. The results we
obtain will be used in Section \ref{s6}. In what
follows $\mathbb K$ stands either for $\rbb$ or
$\cbb$. In this section $A$ is assumed to be a
finitely generated commutative $*$-algebra over
$\mathbb K$ with a unit $e$ (the unit and the zero
elements of $A$ are always assumed to be different).
One can show that if $\mathbb K = \rbb$, then $A$ is
the direct sum of two vector spaces \mbox{$A_{\mathrm
R}:=\{a \in A \colon a^*=a\}$} and \mbox{$\{a \in A
\colon a^*=-a\}$}, the first of which is a finitely
generated unital $*$-algebra with the identity mapping
as involution. If $\mathbb K= \cbb$, then $A$ always
has a finite set of selfadjoint generators (if
necessary replace the old generators $a_1, \ldots,
a_n$ of $A$ by selfadjoint ones $\{a_k+a_k^*\}_{k=1}^n
\cup \{\mathrm i (a_k-a_k^*)\}_{k=1}^n$). Denote by
$\varDelta^*(A)$ the set of all multiplicative linear
functionals $\gamma \colon A \to \mathbb K$ such that
$\gamma(e)=1$ and $\gamma(a^*) = \overline{\gamma(a)}$
for every $a \in A$. Equip $\varDelta^*(A)$ with the
topology of pointwise convergence. For $a \in A$, we
define the continuous mapping $\hat a \colon
\varDelta^*(A) \to \mathbb K$ by $\hat a(\gamma) :=
\gamma(a)$ for $\gamma \in \varDelta^*(A)$. If ${\bf
a} := (a_1, \ldots, a_n)$ are generators of $A$ (which
are assumed to be selfadjoint in the case of $\mathbb
K = \cbb$), then the mapping
   \begin{align} \label{homeo}
\varPhi_{\bf a} \colon \varDelta^*(A) \ni \gamma \mapsto
(\gamma(a_1), \ldots, \gamma(a_n)) \in \mathcal
R(\varPhi_{\bf a}) \subset \rbb^n
   \end{align}
is a homeomorphic embedding of $\varDelta^*(A)$ onto a
closed subset $\mathcal R(\varPhi_{\bf a})$ of $\rbb^n$.
This implies that the $\sigma$-algebra of all Borel subsets
of $\varDelta^*(A)$ is the smallest $\sigma$-algebra of
subsets of $\varDelta^*(A)$ with respect to which all the
mappings $\hat a$, $a \in A$, are measurable. We say that
$A$ is {\em $\varDelta^*$-separative} if $\varDelta^*(A)
\neq \varnothing$ and $\bigcap_{\gamma \in \varDelta^*(A)}
\ker \gamma = \{0\}$ (equivalently:\ the mapping $A \ni a
\mapsto \hat a \in \mathbb K^{\varDelta^*(A)}$ is
injective). If $A$ is $\varDelta^*$-separative, then the
mapping $a \mapsto \hat a \circ \varPhi_{\bf a}^{-1}$ is a
$*$-algebra isomorphism between $*$-algebras $A$ and
$A_{\bf a}$, where $A_{\bf a}$ consists of all functions $f
\colon \mathcal R(\varPhi_{\bf a})\to \mathbb K$ for which
there exists $P \in \mathbb K[X_1, \ldots, X_n]$ such that
$f(x)=P(x)$ for all $x \in \mathcal R(\varPhi_{\bf a})$
($A_{\bf a}$ is equipped with pointwise defined algebraic
operations and involution $f^*(x)=\overline{f(x)}$ for $x
\in \mathcal R(\varPhi_{\bf a})$).

A linear functional $\varLambda \colon A \to \mathbb K$
such that $\varLambda(a^*a) \Ge 0$ for all $a \in A$ is
called {\em positive definite}. A linear functional
$\varLambda \colon A \to \mathbb K$ is said to be {\em
strictly positive definite} if $\varLambda(a^*a)
> 0$ for all $a \in A \setminus \{0\}$. Denote by
$\mathcal{PD}(A)$ the set of all positive definite linear
functionals on $A$ and equip it with the topology of
pointwise convergence. Clearly, $\varDelta^*(A)$ is a
closed subset of $\mathcal{PD}(A)$. We say that $A$ is {\em
semiperfect} if $\varDelta^*(A) \neq \varnothing$ and for
every $\varLambda \in \mathcal{PD}(A)$ there is a positive
Borel measure $\mu$ on $\varDelta^*(A)$ such that
   \begin{align}    \label{mfu}
\varLambda(a) = \int_{\varDelta^*(A)} \hat a \D \mu, \quad
a \in A.
   \end{align}
By \eqref{homeo}, such $\mu$ is automatically regular
(cf.\ \cite[Theorem 2.18]{rud}). The functional
$\varLambda$ of the form \eqref{mfu} is called a {\em
moment functional}. Applying the identification
\eqref{homeo} and the measure transport theorem, one
can deduce from the Riesz-Haviland characterization of
Hamburger moment problem on $\mathcal R(\varPhi_{\bf
a})$ (cf.\ \cite{rie,hav1,hav2}) that
   \begin{align} \label{hav}
\begin{minipage}{29em} if $\varDelta^*(A)\neq \varnothing$,
then a linear functional $\varLambda \colon A \to \mathbb
K$ is a moment functional if and only if $\varLambda (a)
\Ge 0$ for every $a \in A$ such that $\hat a \Ge 0$.
\end{minipage}
   \end{align}
It follows directly from \eqref{hav} that
   \begin{align} \label{hav-lim}
\begin{minipage}{29em} if $\varDelta^*(A)\neq \varnothing$
and $\varLambda_k \colon A \to \mathbb K$, $k \Ge 1 $, is a
sequence of moment functionals which is pointwise
convergent to a function $\varLambda \colon A \to \mathbb
K$, then $\varLambda$ is a moment functional.
   \end{minipage}
   \end{align}
   \begin{rem} \label{qotpd}
If $I$ is a proper $*$-ideal of $A$, then the quotient
$*$-algebra $A/I$ is finitely generated and the set
$\{\gamma \in \varDelta^*(A) \colon I \subset \ker
\gamma\}$ is a closed subset of $\varDelta^*(A)$ which is
homeomorphic to $\varDelta^*(A/I)$ via the mapping $\gamma
\mapsto \gamma_I$, where $\gamma_I(a + I) := \gamma(a)$ for
$a \in A$. Similarly, the set $\{\varLambda \in
\mathcal{PD}(A) \colon I \subset \ker \varLambda\}$ is a
closed subset of $\mathcal{PD}(A)$ which is homeomorphic to
$\mathcal{PD}(A/I)$ via the mapping $\varLambda \mapsto
\varLambda_I$, where $\varLambda_I(a + I) := \varLambda(a)$
for $a \in A$.
   \end{rem}
In what follows, we regard $\mathbb K[X_1, \ldots, X_n]$ as
the $*$-algebra over $\mathbb K$ with the unique involution
such that $X_j^*=X_j$ for all $j=1,\ldots,n$. Thus the
involution of $\rbb[X_1, \ldots,X_n]$ is the identity
mapping. Note that the $*$-algebra $A$ which has
selfadjoint generators is $*$-isomorphic to a quotient
$*$-algebra $\mathbb K[X_1, \ldots, X_n]/I$, where $I$ is a
proper $*$-ideal of $\mathbb K[X_1, \ldots, X_n]$ (apply
the homomorphism theorem to the $*$-algebra epimorphism
$\mathbb K[X_1, \ldots, X_n] \ni P \mapsto P(a_1, \ldots,
a_n) \in A$, where $a_1, \ldots, a_n$ are selfadjoint
generators of $A$). Otherwise, $\mathbb K = \rbb$ and
$A_{\mathrm R} \neq A$. In this particular case, the
$*$-algebra $A$ is $*$-isomorphic to a quotient $*$-algebra
$\rbb[X_1, \ldots, X_n, Y_1, \ldots, Y_n]/I$, where $I$ is
a proper $*$-ideal of the $*$-algebra $\rbb[X_1, \ldots,
X_n, Y_1, \ldots, Y_n]$ which is equipped with the unique
involution such that $X_j^*=Y_j$ for all $j=1,\ldots,n$.

We now discuss $\varDelta^*$-separativity of the model
$*$-algebra $\mathbb K[X_1, \ldots, X_n]/I$ leaving the
case $\rbb[X_1, \ldots, X_n, Y_1, \ldots, Y_n]/I$ aside (it
is more complicated and not essential for the main purpose
of this paper).
   \begin{lem} \label{quo} Let $I$ be a proper $*$-ideal of
$A = \mathbb K[X_1, \ldots, X_n]$. Then the following
assertions hold:
   \begin{enumerate}
   \item[(i)] $\varDelta^*(A/I) \neq \varnothing$ if and only
if $\zera I \neq \varnothing$, where $\zera I$ is defined
in \eqref{zera},
   \item[(ii)]  if $\zera{I} \neq \varnothing$, then the
mapping
   \begin{align*}
\varPsi_I \colon \varDelta^* (A/I) \ni \chi \mapsto
(\chi(X_1 + I), \ldots, \chi(X_n + I)) \in \zera I
   \end{align*}
is a well defined homeomorphism such that
   \begin{align}     \label{chiP+I}
\chi(P+I) = P(\varPsi_I(\chi)), \quad P \in A, \, \chi \in
\varDelta^* (A/I),
   \end{align}
   \item[(iii)] $A/I$ is $\varDelta^*$-separative if and
only if $I$ is a set ideal.
   \end{enumerate}
   \end{lem}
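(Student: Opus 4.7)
The overall plan is to identify $\varDelta^*(A)$ explicitly with $\rbb^n$ via evaluations at real points, then pass to the quotient by means of the canonical homeomorphism from Remark \ref{qotpd}. Under this identification, the subset of $\varDelta^*(A)$ consisting of functionals annihilating $I$ corresponds exactly to $\zera I$, which is the structural reason why $\zera I$ governs all three assertions.

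First I would establish $\varDelta^*(\mathbb K[X_1,\ldots,X_n]) \cong \rbb^n$: each $\gamma \in \varDelta^*(A)$ is determined by its values on the self-adjoint generators $X_1,\ldots,X_n$, and the compatibility condition $\gamma(X_j) = \overline{\gamma(X_j)}$ forces those values into $\rbb$; conversely, every $x \in \rbb^n$ defines the multiplicative $*$-functional $P \mapsto P(x)$, and the correspondence is a homeomorphism for the pointwise topologies. By Remark \ref{qotpd}, the map $\gamma \mapsto \gamma_I$ is then a homeomorphism from $\{\gamma \in \varDelta^*(A) \colon I \subset \ker \gamma\}$ onto $\varDelta^*(A/I)$. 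Under the previous identification, $I \subset \ker \gamma$ reads $P(x)=0$ for every $P \in I$, i.e.\ $x \in \zera I$. This proves (i) immediately; composing the two homeomorphisms yields (ii) with $\varPsi_I(\chi)=(\chi(X_1+I),\ldots,\chi(X_n+I))$, and \eqref{chiP+I} follows from the multiplicativity of $\chi$ together with the fact that $X_1+I,\ldots,X_n+I$ generate $A/I$.

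For (iii), I would unwind the definition using \eqref{chiP+I}: $A/I$ is $\varDelta^*$-separative precisely when $\varDelta^*(A/I) \neq \varnothing$ and $\chi(P+I)=0$ for all $\chi \in \varDelta^*(A/I)$ forces $P \in I$. Via (i)--(ii) this says $\zera I \neq \varnothing$ together with $\ideal{\zera I} \subseteq I$. Combined with the trivial inclusion $I \subseteq \ideal{\zera I}$ from Lemma \ref{dodat} (and its complex analogue), this is equivalent to $I = \ideal{\zera I}$, which is exactly the statement that $I$ is a set ideal. The reverse direction also uses that a proper set ideal $I = \ideal Z$ has $Z \neq \varnothing$ (otherwise $\ideal Z = A$, contradicting properness), so $\zera I \supseteq Z$ is non-empty and (i) delivers $\varDelta^*(A/I) \neq \varnothing$.

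The main point requiring care is the bookkeeping around the involution: one must verify that the $*$-condition in the definition of $\varDelta^*(A)$ rules out genuinely complex evaluation points in the case $\mathbb K = \cbb$, and that the set-ideal formalism transfers verbatim from $\rcal_n$ to $\pcal_n$ (here it helps that $\idealc Z$ is automatically a $*$-ideal whenever $Z \subset \rbb^n$, and that the complex version of Lemma \ref{dodat} is proved by the same elementary argument).
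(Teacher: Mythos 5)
Your proof is correct and captures the same essential content as the paper's: both hinge on the correspondence between $*$-characters of $A/I$ and point evaluations at $\zera I$, with \eqref{chiP+I} following from multiplicativity on the generators, and (iii) reducing to $\ideal{\zera I}\subseteq I$ plus non-emptiness. The paper constructs $\varPsi_I$ and its inverse $x\mapsto\gamma_x$ directly inside the proof, whereas you route through the identification $\varDelta^*(\mathbb K[X_1,\ldots,X_n])\cong\rbb^n$ and the quotient homeomorphism of Remark \ref{qotpd} --- an organizational rather than substantive difference.
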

   \begin{proof}
(i) $\&$ (ii) Suppose that $\varDelta^* (A/I) \neq
\varnothing$. It is easily seen that \eqref{chiP+I}
holds. Moreover, $\varPsi_I(\chi) \in \zera I$ for every
$\chi \in \varDelta^* (A/I)$, because
   \begin{align*}
P(\varPsi_I(\chi)) \overset{\eqref{chiP+I}} = \chi(P+I) =
\chi(0+I) = 0, \quad P \in I.
   \end{align*}
Conversely, if $x \in \zera I$, then the mapping
$\gamma_x \colon A/I \ni P+I \mapsto P(x) \in \mathbb K$
is well defined and $\gamma_x \in \varDelta^*(A/I)$.
What is more, $\varPsi_I(\gamma_x) = x$. It follows from
\eqref{chiP+I} that $\gamma_{\varPsi_I(\chi)} = \chi$
for all $\chi \in \varDelta^*(A/I)$, which means that
$\varPsi_I^{-1}(x) = \gamma _x$ for all $x \in \zera I$.
That $\varPsi_I$ is a homeomorphism is now obvious.

(iii) Suppose that $I = \ideal Z :=\{P \in A \colon
\forall \, x \in Z \;\; P(x)=0\}$, where $Z \subset
\rbb^n$. If $P \in A \setminus I$, then there exists
$x\in Z \subset \zera I$ such that $P(x) \neq 0$.
Hence $\gamma_x(P+I) \neq 0$, which means that $A/I$
is $\varDelta^*$-separative. Conversely, if $A/I$ is
$\varDelta^*$-separative, then $\bigcap_{\chi \in
\varDelta^*(A/I)} \ker \chi = \{0\}$. In view of (i)
and (ii), this equality implies that every $P \in A$
which vanishes on $\zera I (\neq \varnothing)$ belongs
to $I$. In other words, $\ideal{\zera I} \subset I$.
Since the reverse inclusion is always true, we get
$I=\ideal{\zera I}$.
   \end{proof}
Referring to Lemma \ref{quo}, note that if $I$ is a set
ideal of $\mathbb K[X_1, \ldots, X_n]$ of the form $I =
\ideal{Z}$, where $Z \subset \rbb^n$, then $\zera{I}$ is
the closure of $Z$ in the Zariski topology, and
$I=\ideal{\zera{I}}$ (use Lemma \ref{dodat} and consult
\cite[Section 9]{c-s-sz}).

We show that if the $*$-algebra $A$ has selfadjoint
generators and $\varDelta^*(A)=\varnothing$, then
$\mathcal{PD}(A) = \{0\}$, which means that $A$ can be
thought of as a semiperfect $*$-algebra.
   \begin{pro}\label{pd0}
Let $A$ be a finitely generated commutative unital
$*$-algebra over $\mathbb K$, which in the case of $\mathbb
K=\rbb$ is equipped with the identity involution. Then
$\varDelta^*(A)=\varnothing$ if and only if
$\mathcal{PD}(A) = \{0\}$.
   \end{pro}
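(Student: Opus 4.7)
My approach is to handle the ``only if'' direction trivially and to build the ``if'' direction on the Real Nullstellensatz together with the Cauchy--Schwarz inequality. The easy implication $(\Leftarrow)$ follows from the observation that any $\gamma \in \varDelta^*(A)$ is a member of $\mathcal{PD}(A)$ (since $\gamma(a^*a) = |\gamma(a)|^2 \Ge 0$) and is nonzero because $\gamma(e) = 1$; hence $\mathcal{PD}(A) = \{0\}$ forces $\varDelta^*(A) = \varnothing$.

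For the substantive direction $(\Rightarrow)$, I would first reduce to a polynomial model. The hypothesis guarantees that $A$ admits selfadjoint generators --- automatic for $\mathbb K = \cbb$ as remarked before Lemma \ref{quo}, and automatic for $\mathbb K = \rbb$ because the involution is the identity. The homomorphism theorem then yields a $*$-isomorphism $A \cong \mathbb K[X_1, \ldots, X_n]/I$ for some proper $*$-ideal $I$ with $X_j^* = X_j$, and by Lemma \ref{quo}\,(i) the assumption $\varDelta^*(A) = \varnothing$ translates to $\zera I = \varnothing$.

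The next step is to extract a ``one plus sum of squares lies in $I$'' identity. When $\mathbb K = \rbb$, Lemma \ref{rn} applied to $\zera I = \varnothing$ gives $\sqrt[\mathrm R]{I} = \ideal{\varnothing} = \rcal_n$, hence $1 \in \sqrt[\mathrm R]{I}$; Lemma \ref{realradical} (with $P = 1$) then produces $Q_1, \ldots, Q_s \in \rcal_n$ with $1 + Q_1^2 + \cdots + Q_s^2 \in I$. When $\mathbb K = \cbb$ I would pass to the selfadjoint part $I_{\mathsf r} := \{P \in I \colon P^* = P\}$, an ideal of $\rcal_n$ with $I = I_{\mathsf r} + \I\, I_{\mathsf r}$ and $\zera{I_{\mathsf r}} = \zera I = \varnothing$ (cf.\ the proof of Lemma \ref{8.10.2007}), and apply the same argument to $I_{\mathsf r}$, landing the identity inside $I_{\mathsf r} \subset I$ with real $Q_j$.

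Finally, for any $\varLambda \in \mathcal{PD}(A)$, Remark \ref{qotpd} lifts $\varLambda$ to a positive definite functional on $\mathbb K[X_1, \ldots, X_n]$ vanishing on $I$. Evaluating on the identity above,
\[
0 = \varLambda(1) + \sum_{j=1}^{s} \varLambda(Q_j^* Q_j),
\]
and non-negativity of each summand forces $\varLambda(1) = 0$; the Cauchy--Schwarz inequality \eqref{schw} (and its standard complex counterpart) then gives $|\varLambda(a)|^2 \Le \varLambda(1)\, \varLambda(a^*a) = 0$ for every $a$, so $\varLambda \equiv 0$. The main obstacle I anticipate is the uniform treatment of the two scalar fields: the Real Nullstellensatz is intrinsically a statement about $\rcal_n$, so the complex case hinges on the reduction $I \rightsquigarrow I_{\mathsf r}$ preserving the zero set --- a device already exploited in Section \ref{s3}.
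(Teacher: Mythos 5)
Your proof is correct, but it takes a more explicit route than the paper's. The paper, after the same reduction to $A \cong \mathbb K[X_1,\ldots,X_n]/I$ and the same observation $\zera I = \varnothing$, simply notes $\ideal{\zera I} = \mathbb K[X_1,\ldots,X_n]$ and invokes Lemma \ref{1} (real case) or Lemma \ref{8.10.2007} (complex case) as a black box: any $\varLambda \in \mathcal{PD}$ vanishing on $I$ must vanish on $\ideal{\zera I}$, i.e.\ on everything. You instead unwind the Real Nullstellensatz directly: $\zera I = \varnothing$ gives $1 \in \sqrt[\mathrm R]{I}$ (or $1 \in \sqrt[\mathrm R]{I_{\mathsf r}}$ in the complex case), hence an explicit certificate $1 + Q_1^2 + \cdots + Q_s^2 \in I$, from which $\varLambda(1) = 0$ and then Cauchy--Schwarz kills $\varLambda$. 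Both arguments rest on the same Nullstellensatz machinery (Lemma \ref{1} is itself proved via Lemmata \ref{cor}, \ref{realradical}, \ref{rn}), so the underlying mathematics is the same; what your version buys is self-containedness and a concrete sum-of-squares witness, at the cost of redoing in a special case the work that Lemma \ref{1} already packaged. The paper's version is shorter precisely because it reuses that packaging, and it sidesteps your stated worry about the two scalar fields, since Lemma \ref{8.10.2007} already handles the $I \rightsquigarrow I_{\mathsf r}$ reduction once and for all.
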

   \begin{proof}
The ``if'' part of the conclusion is obvious. Suppose now
that $\varDelta^*(A)=\varnothing$. As we know, the
$*$-algebra $A$ is $*$-isomorphic to a quotient $*$-algebra
$B/I$, where $I$ is a proper $*$-ideal of $B=\mathbb K[X_1,
\ldots, X_n]$. By Lemma \ref{quo}\,(i), we have $\zera I =
\varnothing$. This implies that $\ideal{\zera I}=B$. Take
$\varXi \in \mathcal{PD}(B/I)$ and set $\varLambda(Q) =
\varXi(Q+I)$ for $Q \in B$. Then $\varLambda \in
\mathcal{PD}(B)$ and $\varLambda|_I=0$. This, together with
Lemma \ref{1} (the real case) and Lemma \ref{8.10.2007}
(the complex case), implies that the functional
$\varLambda$ vanishes on $\ideal{\zera I}=B$. Hence,
$\varXi = 0$, which completes the proof.
   \end{proof}
Below, we characterize semiperfectness of the model
$*$-algebra $\mathbb K[X_1, \ldots, X_n]/I$.
   \begin{pro}  \label{tabsem}
If $I$ is a $*$-ideal of $A=\mathbb K[X_1, \ldots, X_n]$,
then the following two conditions are equivalent{\em :}
   \begin{enumerate}
   \item[(i)] $\zera I \neq \varnothing$ and every positive
definite linear functional
$\varLambda \colon A \to \mathbb K$ vanishing on $I$ is a
moment functional on $\zera I$,
   \item[(ii)] the ideal $I$ is
proper and the $*$-algebra $A/I$ is semiperfect.
   \end{enumerate}
   \end{pro}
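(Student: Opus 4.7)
The plan is to reduce the equivalence to the canonical identifications provided by Remark \ref{qotpd} and Lemma \ref{quo}, combined with a routine measure transport along the homeomorphism $\varPsi_I$.

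First, observe that both conditions force $I$ to be proper: in (ii) this is explicit, and in (i) it follows because $I=A$ would give $\zera I = \varnothing$, contradicting the first clause. By Lemma \ref{quo}(i), the condition $\zera I \neq \varnothing$ is moreover equivalent to $\varDelta^*(A/I) \neq \varnothing$, so the two nonemptiness clauses match.

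Next, by Remark \ref{qotpd}, positive definite linear functionals $\varLambda \colon A \to \mathbb K$ vanishing on $I$ correspond bijectively to positive definite linear functionals $\varXi \colon A/I \to \mathbb K$ via $\varXi(P+I) = \varLambda(P)$. By Lemma \ref{quo}(ii), when $\zera I \neq \varnothing$ the map $\varPsi_I \colon \varDelta^*(A/I) \to \zera I$ is a homeomorphism, and by \eqref{chiP+I} we have $\widehat{P+I}(\chi) = P(\varPsi_I(\chi))$ for every $P \in A$ and $\chi \in \varDelta^*(A/I)$.

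The equivalence now reduces to transporting representing measures along $\varPsi_I$. Given a representing measure $\mu$ for $\varLambda$ on $\zera I$, define a positive Borel measure $\nu$ on $\varDelta^*(A/I)$ by $\nu(E) := \mu(\varPsi_I(E))$; the measure transport theorem together with \eqref{chiP+I} then gives
$$\varXi(P+I) = \varLambda(P) = \int_{\zera I} P(x) \, \mathrm d\mu(x) = \int_{\varDelta^*(A/I)} \widehat{P+I}(\chi) \, \mathrm d\nu(\chi), \quad P \in A,$$
exhibiting $\varXi$ as a moment functional on $\varDelta^*(A/I)$. Conversely, a representing measure for $\varXi$ pushes forward along $\varPsi_I$ to a representing measure for $\varLambda$ on $\zera I$. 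There is no real obstacle; the main thing to get right is disposing of the degenerate case $I=A$ at the outset (automatic in both conditions) and checking integrability under the change of variables, which is immediate because $\widehat{P+I} \circ \varPsi_I^{-1}$ equals the restriction of $P$ to $\zera I$.
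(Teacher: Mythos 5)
Your proof is correct and follows essentially the same route as the paper: both directions are obtained by passing between functionals on $A$ vanishing on $I$ and functionals on $A/I$, then transporting representing measures along the homeomorphism $\varPsi_I$ from Lemma \ref{quo}(ii) using \eqref{chiP+I} and the measure transport theorem. The only cosmetic difference is that you invoke Remark \ref{qotpd} explicitly for the functional correspondence, whereas the paper constructs $\varXi$ from $\varLambda$ (and vice versa) directly.
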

   \begin{proof}
   (i)$\Rightarrow$(ii) As $\zera I \neq \varnothing$, we
see that $I \neq A$ and, by Proposition \ref{quo}\,(i),
$\varDelta^*(A/I) \neq \varnothing$. Take $\varXi \in
\mathcal{PD}(A/I)$ and set $\varLambda(Q) = \varXi(Q+I)$
for $Q \in A$. Then $\varLambda \in \mathcal{PD}(A)$ and
$\varLambda|_I=0$. Hence $\varLambda$ is a moment
functional on $\zera{I}$ with a representing measure $\mu$.
By Lemma \ref{quo}\,(ii) and the measure transport theorem,
we have
   \allowdisplaybreaks
   \begin{align*}
\varXi(Q+I) = \int_{\zera{I}} Q(x) \text{ d} \mu(x) &
\hspace{.4em} = \int_{\varDelta^* (A/I)} Q(\varPsi_I
(\chi)) \text{ d} (\mu \circ \varPsi_I) (\chi)
   \\
   & \overset{\eqref{chiP+I}}= \int_{\varDelta^* (A/I)}
\chi(Q+I) \text{ d} (\mu \circ \varPsi_I) (\chi), \quad Q
\in A,
   \end{align*}
where $\mu \circ \varPsi_I(\sigma)=\mu(\varPsi_I(\sigma))$
for a Borel subset $\sigma$ of $\varDelta^* (A/I)$. This
gives (ii).

   (ii)$\Rightarrow$(i) By Proposition \ref{quo}\,(i), we
have $\zera I \neq \varnothing$. If $\varLambda \in
\mathcal{PD}(A)$ vanishes on $I$, then the mapping $\varXi
\colon A/I \ni Q+I \mapsto \varLambda(Q) \in \mathbb K$ is
well defined and $\varXi \in \mathcal{PD}(A/I)$. Hence
there exists a positive Borel measure $\nu$ on $\varDelta^*
(A/I)$ such that
   \allowdisplaybreaks
   \begin{align*}
\varLambda(Q) = \int_{\varDelta^* (A/I)} \chi(Q+I) \text{
d} \nu (\chi) & \overset{\eqref{chiP+I}}= \int_{\varDelta^*
(A/I)} Q(\varPsi_I (\chi)) \text{ d} \nu (\chi)
   \\
   & \hspace{.41em}= \int_{\zera{I}} Q(x) \text{ d} (\nu
\circ \varPsi_I^{-1}) (x), \quad Q \in A,
   \end{align*}
which completes the proof.
   \end{proof}
   \begin{cor} \label{a/i=a/j}
Let $I$ and $J$ be proper $*$-ideals of $A=\mathbb
K[X_1, \ldots, X_n]$ such that $\zera I = \zera J$ and
for every $\varLambda \in \mathcal{PD}(A)$,
$\varLambda|_I=0$ if and only if $\varLambda|_J=0$.
Then the $*$-algebra $A/I$ is semiperfect if and only
if the $*$-algebra $A/J$ is semiperfect.
   \end{cor}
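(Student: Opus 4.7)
The plan is to reduce the statement directly to Proposition \ref{tabsem}, which packages the equivalence between semiperfectness of the quotient $A/I$ and the condition that every positive definite linear functional on $A$ vanishing on $I$ is a moment functional on $\zera I$. By symmetry, it suffices to prove one implication: assuming $A/I$ is semiperfect, I would show that $A/J$ is semiperfect.

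First, since $I$ is proper and $A/I$ is semiperfect, Proposition \ref{tabsem} (applied in the direction (ii)$\Rightarrow$(i)) gives that $\zera I \neq \varnothing$ and that every $\varLambda \in \mathcal{PD}(A)$ vanishing on $I$ is a moment functional on $\zera I$. Next, pick an arbitrary $\varLambda \in \mathcal{PD}(A)$ with $\varLambda|_J = 0$. The hypothesis on $(I,J)$ yields $\varLambda|_I = 0$, so $\varLambda$ is a moment functional on $\zera I$. Using the other standing hypothesis $\zera I = \zera J$, this says precisely that $\varLambda$ is a moment functional on $\zera J$, and in particular $\zera J = \zera I \neq \varnothing$. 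Thus condition (i) of Proposition \ref{tabsem} holds for $J$. Applying the direction (i)$\Rightarrow$(ii) of the same proposition, and using that $J$ is proper by assumption, I conclude that $A/J$ is semiperfect.

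There is no real obstacle here; the only thing to verify is that the two hypotheses on $(I,J)$ match exactly the two ingredients needed to transfer condition (i) of Proposition \ref{tabsem} from $I$ to $J$, namely the equality of zero sets and the equivalence of the vanishing conditions on $\mathcal{PD}(A)$. The properness assumption on both ideals ensures that the (ii)$\Leftrightarrow$(i) passage in Proposition \ref{tabsem} is available on each side. The symmetric argument, interchanging the roles of $I$ and $J$, completes the proof.
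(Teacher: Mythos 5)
Your proof is correct and is exactly the intended argument: the paper states this corollary immediately after Proposition \ref{tabsem} without proof, and the natural deduction is precisely your transfer of condition (i) of that proposition from $I$ to $J$ using the two hypotheses $\zera I = \zera J$ and the equivalence of vanishing on $I$ and on $J$ for members of $\mathcal{PD}(A)$, followed by the symmetric argument.
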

Recall that $(P)$ is the ideal of $\mathbb K[X_1,
\ldots, X_n]$ generated by a polynomial $P \in \mathbb
K[X_1, \ldots, X_n]$ and $\mathcal Z_P := \{x \in
\rbb^n \colon P(x)=0\}$.
   \begin{cor} \label{coralg}
If $P \in A=\mathbb K[X_1, \ldots, X_n]$, $P=P^*$ and
$\mathcal Z_P \neq \varnothing$, then the ideals $(P)$ and
$\ideal{\mathcal Z_P}$ are proper and the following two
conditions are equivalent\/{\em :}
   \begin{enumerate}
   \item[(i)] the polynomial $P$ is of type {\sf A}
$($respectively {\sf B}$)$,
   \item[(ii)] the $*$-algebra $A/(P)$
$($respectively $A/\ideal{\mathcal Z_P}$$)$ is semiperfect.
   \end{enumerate}
   \end{cor}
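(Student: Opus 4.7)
The plan is to reduce the corollary to two applications of Proposition \ref{tabsem}, once with $I=(P)$ and once with $I=\ideal{\mathcal Z_P}$. Before doing so, I would dispose of the bookkeeping. Pick any $x_0\in\mathcal Z_P$: since every element of $(P)$ is a multiple of $P$ and every element of $\ideal{\mathcal Z_P}$ vanishes on all of $\mathcal Z_P$, the constant polynomial $1$ lies in neither of these ideals, so both are proper. The hypothesis $P=P^*$ makes $(P)$ closed under the involution; for $\ideal{\mathcal Z_P}$, since $\mathcal Z_P\subset\rbb^n$ and $R^*(x)=\overline{R(x)}$ for $x\in\rbb^n$, the ideal is automatically a $*$-ideal. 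Hence the quotient $*$-algebras $A/(P)$ and $A/\ideal{\mathcal Z_P}$ are well defined.

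Next I would identify the zero sets: $\zera{(P)}=\mathcal Z_P$ is immediate from the definitions, and $\zera{\ideal{\mathcal Z_P}}=\mathcal Z_P$ follows from Lemma \ref{dodat} combined with the observation that $P\in\ideal{\mathcal Z_P}$. Both zero sets are non-empty by hypothesis, so Proposition \ref{tabsem} applies to each of these ideals.

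Finally I would translate Proposition \ref{tabsem} into the language of types {\sf A} and {\sf B}. Specialised to $I=(P)$, its condition (i) reads: every positive definite linear functional on $A$ vanishing on $(P)$ is a moment functional on $\mathcal Z_P$. Using the remark recalled at the start of Section \ref{s2} --- and valid in the complex case as well, because $P=P^*$ forces $P^*P\in(P)$, so that any representing measure of such a functional must be concentrated on $\mathcal Z_P$ --- being a moment functional on $\mathcal Z_P$ is equivalent to being a moment functional simpliciter. The resulting equivalence is precisely the assertion that $P$ is of type {\sf A}. The parallel specialisation with $I=\ideal{\mathcal Z_P}$ yields the type-{\sf B} equivalence. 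The main step needing care is matching up the various notions of moment functional (on $A$, on $\rbb^n$, and on $\mathcal Z_P$), but this is handled by the homeomorphism \eqref{homeo} and the concentration argument just mentioned, so no substantively new ingredient is required.
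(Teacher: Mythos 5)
Your proposal is correct and takes essentially the same route as the paper: identify $\zera{(P)}=\zera{\ideal{\mathcal Z_P}}=\mathcal Z_P$, apply Proposition \ref{tabsem} to each of the two ideals, and invoke the fact (Proposition 2.1 of \cite{sto}, which you reprove on the spot via $P^*P\in(P)$) that a moment functional vanishing on $(P)$ is automatically a moment functional on $\mathcal Z_P$. The paper's proof is a one-liner citing exactly these ingredients; your version simply fills in the bookkeeping (properness, $*$-ideal structure, identification of zero sets) that the authors leave implicit.
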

   \begin{proof} Since $\zera{(P)} = \mathcal Z_P$ and
$\zera{\ideal{\mathcal Z_P}}=\mathcal Z_P$, we can
apply Proposition \ref{tabsem} to $*$-ideals $(P)$ and
$\ideal{\mathcal Z_P}$ (use also \cite[Proposition
2.1]{sto}).
   \end{proof}
It may happen that $\mathbb K[X_1, \ldots, X_n]/I$ is
semiperfect but not $\varDelta^*$-separative.
   \begin{exa} \label{sep}
The proper $*$-ideal $I_1:=(1+X_1^2 + X_2^2)$ of $A=\mathbb
K[X_1, X_2]$ has the property that $\zera {I_1} =
\varnothing$, which by Lemma \ref{quo}\,(i) means that
$\varDelta^*(A/{I_1}) = \varnothing$. In turn, if the
polynomial $P\in A$ is of the form $P(X_1,X_2):=X_1^2$,
then the $*$-ideal $I_2:= (P)$ of $A$ is proper and $\zera
{I_2} \neq \varnothing$. By Lemma \ref{quo}\,(i), this
implies that $\varDelta^*(A/I_2) \neq \varnothing$. Since
$I_2$ is not a set ideal, the $*$-algebra $A/I_2$ is not
$\varDelta^*$-separative (cf.\ Lemma \ref{quo}\,(iii)).
However, by \cite[Theorem 5.4]{sto}, the polynomial $P$ is
of type {\sf A} (as a member of $A$) and hence, by
Corollary \ref{coralg}, the $*$-algebra $A/I_2$ is
semiperfect.
   \end{exa}
Recall that if $A=\mathbb K[X_1, \ldots, X_n]$ and
$\varLambda \in \mathcal{PD}(A)$, then the set
   \begin{align*}
\idealf \varLambda := \{P \in A \colon \varLambda(P^*P)=0\}
   \end{align*}
is the greatest ideal of $A$ contained in the kernel of
$\varLambda$ (see Lemmata \ref{cor} and \ref{11.XII.06}).
   \begin{pro}  \label{tabsem+}
Let $I$ be a $*$-ideal of $A=\mathbb K[X_1, \ldots, X_n]$
such that $\zera I \neq \varnothing$. Then the ideal $I$ is
proper and the following two conditions are equivalent{\em
:}
   \begin{enumerate}
   \item[(i)] every positive definite linear functional
$\varLambda \colon A \to \mathbb K$ such that $\idealf
\varLambda=I$ is a moment functional on $\zera I$,
   \item[(ii)] every strictly  positive definite linear functional
$\varXi \colon A/I \to \mathbb K$ is a moment functional.
   \end{enumerate}
   \end{pro}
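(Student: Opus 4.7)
The plan is to reduce Proposition \ref{tabsem+} to the same correspondence used in Proposition \ref{tabsem}, tracking the extra strictness condition on both sides. First, observe that $\mathcal Z(I) \neq \varnothing$ forces $I$ to be proper, since any evaluation functional at $x \in \mathcal Z(I)$ is a unital homomorphism $A \to \mathbb K$ that annihilates $I$; if $1 \in I$ we would get $1 = 0$ in $\mathbb K$. Hence the quotient $A/I$ is a nontrivial finitely generated commutative unital $*$-algebra and $\mathcal{PD}(A/I)$ makes sense.

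Next, I would set up the fundamental bijection. If $\Lambda \in \mathcal{PD}(A)$ satisfies $\idealf\Lambda = I$, then $I \subset \ker \Lambda$ by Lemmata \ref{cor}(i) / \ref{11.XII.06}(i), so the formula $\Xi(Q+I) := \Lambda(Q)$ defines a positive definite functional on $A/I$. The condition $\idealf\Lambda = I$ translates directly into strict positive definiteness of $\Xi$: indeed, for $Q \in A$, $\Xi((Q+I)^*(Q+I)) = \Lambda(Q^*Q) = 0$ iff $Q \in \idealf\Lambda = I$, i.e.\ iff $Q+I = 0$ in $A/I$. Conversely, any strictly positive definite $\Xi \in \mathcal{PD}(A/I)$ pulls back to $\Lambda \in \mathcal{PD}(A)$ with $\idealf\Lambda = I$ by the same calculation. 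This gives a bijection between the functionals appearing in (i) and those appearing in (ii).

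Finally, I would transport moment representations across this bijection using the homeomorphism $\Psi_I \colon \varDelta^*(A/I) \to \zera I$ of Lemma \ref{quo}(ii), exactly as in the proof of Proposition \ref{tabsem}. If $\Lambda$ is a moment functional on $\zera I$ with representing measure $\mu$, then by \eqref{chiP+I} and the measure transport theorem,
\begin{align*}
\Xi(Q+I) \;=\; \int_{\zera I} Q(x) \D \mu(x) \;=\; \int_{\varDelta^*(A/I)} \chi(Q+I) \D (\mu \circ \Psi_I)(\chi), \quad Q \in A,
\end{align*}
so $\Xi$ is a moment functional on $\varDelta^*(A/I)$ with representing measure $\mu \circ \Psi_I$. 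Running the same computation with $\nu \circ \Psi_I^{-1}$ in place of $\mu \circ \Psi_I$ proves the reverse transport. Combining this transport with the bijection of the previous paragraph yields (i)$\Leftrightarrow$(ii).

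There is no serious obstacle; the argument is essentially bookkeeping, but the one point that requires care is checking that $\idealf\Lambda = I$ corresponds precisely to strict (not just ordinary) positive definiteness of the induced functional $\Xi$ on $A/I$. This is where the hypothesis in (i) matches up with the hypothesis in (ii), and it is the only place the statement differs in substance from Proposition \ref{tabsem}.
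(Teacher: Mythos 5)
Your proposal is correct and takes essentially the same route as the paper: establish the bijection $\varLambda \mapsto \varLambda_I$ between $\{\varLambda \in \mathcal{PD}(A) : \idealf\varLambda = I\}$ and the strictly positive definite functionals on $A/I$, then transport moment representations across $\varPsi_I$ exactly as in the proof of Proposition \ref{tabsem}. The paper phrases this tersely by invoking Remark \ref{qotpd} and saying the arguments of Proposition \ref{tabsem} are to be repeated; you have simply written those steps out in full.
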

   \begin{proof}
Applying Remark \ref{qotpd}, we verify that the mapping
$\varLambda \mapsto \varLambda_I$ is a bijection between
the set of all positive definite linear functionals
$\varLambda \colon A \to \mathbb K$ such that $\idealf
\varLambda=I$ and the set of all strictly positive definite
linear functionals $\varXi \colon A/I \to \mathbb K$. This
enables us to repeat arguments used in the proof of
Proposition \ref{tabsem}.
   \end{proof}
   \begin{exa} \label{kontrprz}
Consider the $*$-algebra $A = \cbb[X_1,X_2]$ and the proper
$*$-ideal $I$ of $A$ generated by the polynomial $P := (X_2
- X_1^2)X_2 \in A$. By \cite[Theorem 6.3]{sto}, the
polynomial $P$ is not of type {\sf A}. This means that
there exists at least one positive definite complex linear
functional on $A$ vanishing on $I$, which is not a moment
functional. Let $\varLambda$ be any such functional. By
Lemma \ref{11.XII.06}\,(ii), the ideal $\idealf\varLambda$
is a set ideal. This fact, when combined with the
discussion in \cite[Example 54]{c-s-sz}, implies that
$\idealf\varLambda = \ideal{\mathcal Z_P} = I$. Thus, the
mapping $\varLambda_I\colon A/I \ni Q + I \mapsto
\varLambda(Q) \in \cbb$ is a well defined strictly positive
definite linear functional on $A/I$ which is not a moment
functional. Summarizing, we have shown that the $*$-algebra
$A/I$ is not semiperfect and each positive definite linear
functional on $A/I$ which is not a moment functional is
automatically strictly positive definite.
   \end{exa}
   \begin{lem} \label{skgenid}
Let $I$ be a $*$-ideal of $A=\mathbb K[X_1, \ldots,
X_n]$ generated by a finite set of polynomials $P_1,
\ldots, P_s \in A$. Set $P=P_1^*P_1 + \ldots +
P_s^*P_s$. Then
   \begin{enumerate}
   \item[(i)] for every $\varLambda \in \mathcal {PD}(A)$,
$\varLambda|_I=0$ if and only if
$\varLambda|_{(P)}=0$,
   \item[(ii)] if $I \neq A$ $($hence $(P) \neq A$$)$, then the
$*$-algebra $A/I$ is semiperfect if and only if the
$*$-algebra $A/(P)$ is semiperfect.
   \end{enumerate}
   \end{lem}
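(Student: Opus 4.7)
The plan is to handle part (i) by a direct positivity argument using the characterization of $\idealf\varLambda$ (or $\idealfc\varLambda$) as the greatest ideal contained in $\ker\varLambda$ (Lemmata \ref{cor}(i) and \ref{11.XII.06}(i)), and then to deduce (ii) as an immediate application of Corollary \ref{a/i=a/j} to the pair of $*$-ideals $I$ and $(P)$, once the two hypotheses of that corollary are verified.

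For (i), the direction $\varLambda|_I=0 \Rightarrow \varLambda|_{(P)}=0$ is immediate, since each $P_k$ belongs to $I$, hence $P_k^*P_k \in I$, and so $P = \sum_{k=1}^s P_k^*P_k \in I$, giving $(P)\subset I$. For the converse, assume $\varLambda(P)=0$. Writing
\begin{align*}
0 = \varLambda(P) = \sum_{k=1}^s \varLambda(P_k^*P_k)
\end{align*}
and invoking positive definiteness of $\varLambda$, each summand must vanish. Hence $P_k \in \idealf\varLambda$ (respectively $\idealfc\varLambda$) for every $k$. Since, by Lemma \ref{cor}(i)/\ref{11.XII.06}(i), this set is an ideal of $A$ contained in $\ker\varLambda$, and since $P_1,\ldots,P_s$ generate $I$, we conclude $I \subset \idealf\varLambda \subset \ker\varLambda$.

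For (ii), I plan to verify the hypotheses of Corollary \ref{a/i=a/j} for the ideals $I$ and $(P)$. Both are assumed proper, and the equivalence ``$\varLambda|_I = 0$ iff $\varLambda|_{(P)}=0$'' is precisely part (i). It remains to check $\zera I = \zera{(P)}$. For each $x\in\rbb^n$,
\begin{align*}
P(x) = \sum_{k=1}^s P_k^*(x)P_k(x) = \sum_{k=1}^s |P_k(x)|^2,
\end{align*}
so $P(x)=0$ if and only if $P_k(x)=0$ for every $k$, which is equivalent to $x \in \zera I$ since $P_1,\ldots,P_s$ generate $I$. Consequently $\zera{(P)} = \zera I$, Corollary \ref{a/i=a/j} applies, and (ii) follows. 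No substantial obstacle is expected, since the heavy lifting (the ideal structure of $\idealf\varLambda$ and the rigidity supplied by Corollary \ref{a/i=a/j}) has already been set up earlier in the paper; the only point requiring care is that the sum-of-squares identity used to equate the zero sets genuinely works over $\mathbb K=\cbb$ as well, because at a \emph{real} point $x$ we have $P_k^*(x)=\overline{P_k(x)}$ and therefore $P_k^*(x)P_k(x) = |P_k(x)|^2$.
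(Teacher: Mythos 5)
Your proof is correct and follows essentially the same route as the paper's: part (i) by extracting $P_k\in\idealf\varLambda$ from $\varLambda(P)=0$ via positive definiteness and then using that $\idealf\varLambda$ is an ideal containing the generators, and part (ii) by checking $\zera I=\zera{(P)}$ and invoking Corollary \ref{a/i=a/j}. The only cosmetic difference is that the paper explicitly notes $\idealf\varLambda$ is a \emph{$*$-ideal} when concluding $I\subset\idealf\varLambda$ (relevant if $I$ is read as the $*$-ideal generated by $P_1,\ldots,P_s$), a point you could make explicit but which does not affect the validity of your argument.
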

   \begin{proof}
(i) Take $\varLambda \in \mathcal {PD}(A)$. If
$\varLambda|_{(P)}=0$, then $0= \varLambda(P_1^*P_1) +
\ldots + \varLambda(P_s^*P_s)$, which implies that $P_1,
\ldots, P_s \in \idealf \varLambda$. Since $\idealf
\varLambda$ is a $*$-ideal of $A$ contained in $\ker
\varLambda$, we deduce that $I \subset \idealf \varLambda
\subset \ker \varLambda$. That $\varLambda|_I=0$ implies
$\varLambda|_{(P)}=0$ follows from $(P) \subset I$.

(ii) If $I \neq A$, then the ideal $J:=(P)$ is proper and
   \begin{align*}
\zera{I} = \mathcal Z_{P_1} \cap \ldots \cap \mathcal
Z_{P_s} = \mathcal Z_P = \zera J,
   \end{align*}
which, when combined with (i) and Corollary \ref{a/i=a/j},
completes the proof.
   \end{proof}
   We now show that the question of semiperfectness of the
model $*$-algebra $\mathbb K[X_1, \ldots, X_n]/I$ can be
reduced to studying polynomials of type {\sf A}.
   \begin{pro}\label{reduction}
If $I$ is a proper $*$-ideal of $A=\mathbb K[X_1, \ldots,
X_n]$, then there exists $P \in I$ such that $P=P^*$,
$(P)\neq A$ and the following three conditions are
equivalent\/{\em :}
   \begin{enumerate}
   \item[(i)] $A/I$ is semiperfect,
   \item[(ii)] $A/(P)$ is semiperfect,
   \item[(iii)] $\mathcal Z_P \neq \varnothing$ and the polynomial $P$ is of type {\sf A}.
   \end{enumerate}
   \end{pro}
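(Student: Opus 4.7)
The plan is to distill the ideal $I$ into a single self\-/adjoint polynomial $P \in I$ that carries the same moment\-/theoretic information, and then to invoke Lemma \ref{skgenid} and Corollary \ref{coralg} in sequence.

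By Hilbert's basis theorem, $I$ is generated by finitely many polynomials $Q_1, \ldots, Q_s \in I$. I would set
\[
P := Q_1^* Q_1 + \cdots + Q_s^* Q_s.
\]
Plainly $P = P^*$. Since each $Q_j \in I$ and $I$ is an ideal, every product $Q_j^* Q_j$ lies in $I$, hence $P \in I$ and $(P) \subset I$; as $I$ is proper, so is $(P)$. Moreover, evaluating at any $x \in \rbb^n$ yields $P(x) = \sum_{j=1}^s |Q_j(x)|^2$, so $\mathcal Z_P = \bigcap_{j=1}^s \mathcal Z_{Q_j} = \zera{I}$.

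The equivalence (i)$\Leftrightarrow$(ii) then follows immediately from Lemma \ref{skgenid}(ii), applied to the generating set $Q_1, \ldots, Q_s$ of $I$ together with the associated sum $P$. For (ii)$\Leftrightarrow$(iii) I would split into two cases. If $\mathcal Z_P = \varnothing$, then Lemma \ref{quo}(i) gives $\varDelta^*(A/(P)) = \varnothing$, so $A/(P)$ fails semiperfectness by definition; hence (ii) is false, and (iii) is false by its first clause. If $\mathcal Z_P \neq \varnothing$, then Corollary \ref{coralg}, applied to the self\-/adjoint polynomial $P$, gives that $A/(P)$ is semiperfect if and only if $P$ is of type \textsf{A}, which together with $\mathcal Z_P \neq \varnothing$ is exactly (iii).

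I do not expect a substantive obstacle here: the argument is a clean assembly of Lemma \ref{skgenid} and Corollary \ref{coralg}, made possible by the single polynomial $P = \sum_j Q_j^* Q_j$, which reduces $I$ to a principal ideal both on the level of positive definite functionals (so (i)$\Leftrightarrow$(ii)) and on the level of zero sets (so that (ii)$\Leftrightarrow$(iii) via Corollary \ref{coralg}). The only mild care needed is to carry along the case $\mathcal Z_P = \varnothing$ separately, since Corollary \ref{coralg} is stated under the hypothesis that $\mathcal Z_P$ is nonempty.
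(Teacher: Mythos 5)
Your proof is correct and takes essentially the same route as the paper: the paper's proof is the one-liner ``Apply Hilbert's basis theorem, Lemma \ref{skgenid}, Corollary \ref{coralg} and Lemma \ref{quo}\,(i),'' and your argument simply spells out how those four ingredients fit together, with the same choice $P = \sum_j Q_j^* Q_j$ that Lemma \ref{skgenid} is built around. The case split on $\mathcal Z_P = \varnothing$ is exactly the role played by Lemma \ref{quo}(i) in the paper's list.
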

   \begin{proof}
Apply Hilbert's basis theorem, Lemma \ref{skgenid},
Corollary \ref{coralg} and Lemma \ref{quo}\,(i).
   \end{proof}
   \section{\label{s6}Types A and C coincide}
In this section we prove that every algebraic set of type
{\sf C} is of type {\sf A}. We begin with some auxiliary
results. Denote by $\card A$ the cardinal number of a set~
$A$.
   \begin{lem} \label{b1}
Let $X$ be a vector space over $\mathbb K$ and $\mathcal F$
be a family of vector subspaces of $X$. If $\mathcal F$
contains a vector space $E_0$ with $\dim E_0 \Le \aleph_0$,
then there exists $\mathcal F_0 \subset \mathcal F$ such
that $1 \Le \card {\mathcal F_0} \Le 1 + \dim E_0$ and
$\bigcap \mathcal F = \bigcap \mathcal F_0$.
   \end{lem}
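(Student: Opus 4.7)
The plan is to reduce everything to intersections inside $E_0$, exploiting the fact that every vector of $E_0$ has finite support in a fixed basis. First I would set $W := \bigcap \mathcal F$; since $E_0 \in \mathcal F$ we have $W \subseteq E_0$, so it suffices to produce a subfamily $\mathcal G \subseteq \mathcal F$ with $\card{\mathcal G} \Le \dim E_0$ such that $E_0 \cap \bigcap \mathcal G = W$. Then $\mathcal F_0 := \{E_0\} \cup \mathcal G$ will satisfy $\bigcap \mathcal F_0 = W = \bigcap \mathcal F$ and $\card{\mathcal F_0} \Le 1 + \dim E_0$.

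I would first handle the finite-dimensional case $\dim E_0 = n < \infty$ by a greedy descent. Starting from $D_0 := E_0$, at each stage for which $D_k \supsetneq W$ one can pick $v \in D_k \setminus W$; since $W = \bigcap \mathcal F$, some $F_{k+1} \in \mathcal F$ fails to contain $v$, so $D_{k+1} := D_k \cap F_{k+1}$ satisfies $\dim D_{k+1} < \dim D_k$. The process terminates after at most $n$ steps with $D_k = W$, and $\mathcal G := \{F_1, \ldots, F_k\}$ has $\card{\mathcal G} \Le n$.

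For the countable case $\dim E_0 = \aleph_0$, I would fix a basis $(e_m)_{m \Ge 1}$ of $E_0$ and set $U_m := \mathrm{span}(e_1, \ldots, e_m)$. Applying the finite-dimensional argument inside $U_m$ to the family $\{F \cap U_m \colon F \in \mathcal F\}$ (whose total intersection is $U_m \cap W$) yields a finite $\mathcal G_m \subseteq \mathcal F$ with $\card{\mathcal G_m} \Le m$ and $U_m \cap \bigcap \mathcal G_m = U_m \cap W$. Take $\mathcal G := \bigcup_{m \Ge 1} \mathcal G_m$, a countable subfamily of $\mathcal F$. For any $x \in E_0 \cap \bigcap \mathcal G$, the finite support of $x$ in the basis gives $x \in U_m$ for some $m$; since $x$ lies in every $F \in \mathcal G_m$, we get $x \in U_m \cap \bigcap \mathcal G_m = U_m \cap W \subseteq W$. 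Hence $\bigcap \mathcal F_0 = W$ and $\card{\mathcal F_0} \Le 1 + \aleph_0 = 1 + \dim E_0$.

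The proof is purely linear-algebraic and I foresee no substantial obstacle. The only subtlety worth highlighting is that the strict-dimension-drop argument terminates only in finite dimensions; the countable case is therefore reduced to the finite-dimensional filtration $(U_m)_m$, with the finite-support property of basis expansions doing the gluing.
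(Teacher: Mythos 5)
Your proof is correct and follows essentially the same two-step strategy as the paper: a strict dimension-drop argument in finite dimensions, then exhaustion of $E_0$ by a nested sequence of finite-dimensional subspaces $(U_m)$ to reduce the $\aleph_0$ case to the finite one (the paper uses an increasing sequence $F_n$ with $\dim F_n = n$, $E_0 = \bigcup_n F_n$, and the families $\{E \cap F_n : E \in \mathcal F\}$, which is the same device). The greedy ``pick $v \in D_k \setminus W$'' phrasing and the explicit reduction to $E_0 \cap \bigcap \mathcal G = W$ are minor presentational differences, not a different route.
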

   \begin{proof}
Consider first the case $\dim E_0 < \aleph_0$. We have two
possibilities: either $E_0 \cap E = E_0$ for every $E \in
\mathcal F$ and so $\bigcap \mathcal F = E_0$, or there
exists $E_1 \in \mathcal F$ such that $E_0 \cap E_1
\subsetneq E_0$. Continuing this procedure, we successively
find $E_0, E_1, \ldots, E_k \in \mathcal F$ ($k \Ge 1$)
such that $E_0 \cap \ldots \cap E_k \subsetneq E_0 \cap
\ldots \cap E_{k-1} \subsetneq \ldots \subsetneq E_0$.
Since always $k \Le \dim E_0$, the procedure must stop at
$k=n \Le \dim E_0$. Then $E_0 \cap \ldots \cap E_n \cap E=
E_0 \cap \ldots \cap E_n$ for every $E \in \mathcal F$,
which implies that $\bigcap \mathcal F = E_0 \cap \ldots
\cap E_n$.

Suppose now that $\dim E_0 = \aleph_0$. Then there exist
vector subspaces $\{F_n\}_{n=1}^\infty$ of $E_0$ such that
$\dim F_n = n$ for every $n \Ge 1$, and $E_0 =
\bigcup_{n=1}^\infty F_n$. Set $\mathcal F_n = \{E \cap F_n
\colon E \in \mathcal F\}$ for $n \Ge 1$. By the first part
of the proof, for every $n\Ge 1$ there exists $\mathcal
F_{n,0} \subset \mathcal F$ such that $1 \Le \card
{\mathcal F_{n,0}} \Le 1+n$ and $\bigcap \mathcal F_n =
\bigcap \{E \cap F_n \colon E \in \mathcal F_{n,0}\}$.
Then, as easily seen, $\mathcal F_0 := \{E_0\} \cup
\bigcup_{n=1}^\infty \mathcal F_{n,0}$ is the required
family.
   \end{proof}
   \begin{lem} \label{b2}
Let $A$ be a $\varDelta^*$-separative finitely generated
commutative $*$-algebra over $\mathbb K$ with unit. Then
there exists a moment functional $\varLambda \colon A \to
\mathbb K$ which is strictly positive definite.
   \end{lem}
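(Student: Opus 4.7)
The plan is to construct $\varLambda$ as integration against a discrete Borel measure $\mu = \sum_n c_n \delta_{\gamma_n}$ concentrated on a countable collection of characters $\gamma_n \in \varDelta^*(A)$ whose kernels intersect trivially, with weights $c_n > 0$ decaying fast enough to make every $\hat a$ integrable.

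First, one uses that a finitely generated commutative unital $*$-algebra has at most countable $\mathbb K$-dimension, so picking any $\gamma_0 \in \varDelta^*(A)$ (nonempty by hypothesis) gives $\dim \ker \gamma_0 \Le \aleph_0$. Applying Lemma \ref{b1} to the family $\mathcal F := \{\ker \gamma : \gamma \in \varDelta^*(A)\}$ with $E_0 := \ker \gamma_0$, together with $\varDelta^*$-separativity (which says $\bigcap \mathcal F = \{0\}$), yields an at most countable indexing set $N$ and characters $\{\gamma_n\}_{n \in N} \subset \varDelta^*(A)$ satisfying $\bigcap_{n \in N} \ker \gamma_n = \{0\}$.

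Next, fix a Hamel basis $\{b_k\}_{k \in K}$ of $A$ (with $K$ at most countable) and select positive weights $c_n$---for example $c_n := 2^{-n}/\max\{1,|\gamma_n(b_1)|,\ldots,|\gamma_n(b_n)|\}$ when $N = \mathbb N$, or simply $c_n := 1$ when $N$ is finite---so that $\sum_{n \in N} c_n|\gamma_n(b_k)| < \infty$ for each $k \in K$. Linearity then gives $\sum_n c_n |\gamma_n(a)| < \infty$ for every $a \in A$, so the discrete measure $\mu := \sum_n c_n \delta_{\gamma_n}$ is a finite positive Borel measure on $\varDelta^*(A)$ against which every $\hat a$ is integrable. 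Setting $\varLambda(a) := \int_{\varDelta^*(A)} \hat a \D \mu = \sum_n c_n \gamma_n(a)$ produces a moment functional by \eqref{mfu}. Strict positive definiteness is immediate: since $\gamma_n(a^*) = \overline{\gamma_n(a)}$ and $\gamma_n$ is multiplicative, $\varLambda(a^*a) = \sum_n c_n |\gamma_n(a)|^2 \Ge 0$, and this vanishes only when $\gamma_n(a) = 0$ for every $n \in N$, which forces $a \in \bigcap_n \ker \gamma_n = \{0\}$.

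The main obstacle is arranging $L^1$-integrability of every polynomial function $\hat a$ against $\mu$ in spite of the fact that $\varDelta^*(A)$ is typically unbounded in $\rbb^n$; this is precisely what the weight-decay prescription above handles, and it is available because $A$ has countable $\mathbb K$-dimension---a consequence of finite generation that lets Lemma \ref{b1} produce a countable separating subfamily in the first place.
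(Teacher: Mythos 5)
Your proof is correct and follows the same overall strategy as the paper's: apply Lemma \ref{b1} (with $E_0$ a kernel of some fixed character, which has at most countable dimension because $A$ does) to extract an at most countable separating subfamily $\{\gamma_n\}$ of $\varDelta^*(A)$, then build a discrete moment functional $\varLambda = \sum_n c_n \gamma_n$ with positive weights $c_n$ decaying fast enough for every $\hat a$ to be integrable, and finally observe that $\varLambda(a^*a)=\sum_n c_n|\gamma_n(a)|^2$ together with $\bigcap_n\ker\gamma_n=\{0\}$ gives strict positive definiteness. Where you diverge from the paper is in the specific device for choosing the weights. You fix a Hamel basis $\{b_k\}$ of $A$ and set $c_n = 2^{-n}/\max\{1,|\gamma_n(b_1)|,\ldots,|\gamma_n(b_n)|\}$, so that for each fixed $k$ one has $c_n|\gamma_n(b_k)|\Le 2^{-n}$ for all $n\Ge k$; integrability of each $\hat b_k$ is then immediate and extends to arbitrary $\hat a$ by finite linearity. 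The paper instead fixes \emph{generators} $a_1,\ldots,a_n$ of $A$, sets $c_j=j^{-2}\exp(-\delta_j)$ with $\delta_j=\max_k|\gamma_j(a_k)|$, and uses the inequality $|t|^k\Le k!\exp(|t|)$ to control the growth of $\hat a$ as a polynomial in $\gamma_j(a_1),\ldots,\gamma_j(a_n)$. Your variant is arguably a bit more elementary since it avoids the exponential comparison and never needs to view elements of $A$ as polynomials in the generators; the paper's variant stays closer to the generator picture used elsewhere in the section. Both are sound, and the rest of your argument (identification of the representing measure, verification of strict positive definiteness via $\varDelta^*$-separativity) matches the paper exactly.
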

   \begin{proof}
Since $\dim A \Le \aleph_0$ and $A$ is
$\varDelta^*$-separative, Lemma \ref{b1} implies that there
exists a sequence $\{\gamma_j\}_{j=1}^\varkappa \subset
\varDelta^*(A)$ with $\varkappa \in \{1,2, \ldots\} \cup
\{\infty\}$ such that
   \begin{align} \label{fin}
\bigcap_{j=1}^\varkappa \ker \gamma_j = \{0\}.
   \end{align}
If $\varkappa < \infty$, then, by \eqref{fin},
$\varLambda(a) = \sum_{j=1}^\varkappa \gamma_j(a)$, $a \in
A$, defines the required functional. If $\varkappa =
\infty$, then $\varLambda$ can be defined by $\varLambda(a)
= \sum_{j=1}^\infty c_j \gamma_j(a)$ for $a \in A$, where
$c_j = j^{-2}\exp(-\delta_j)$, $\delta_j = \max_{k=1}^n
|\gamma_j(a_k)|$ for $j=1, 2, \ldots$, and $a_1, \ldots,
a_n$ are generators of $A$. Since $|t|^k \Le k! \exp(|t|)$
for all $t\in \rbb$ and $k=0,1,2, \ldots$, the series
defining $\varLambda$ is absolutely convergent. The
representing measure $\mu$ of $\varLambda$ is given by
$\mu(\sigma) = \sum_{j=1}^\infty c_j \chi_\sigma(\gamma_j)$
for a Borel subset $\sigma$ of $\varDelta^*(A)$, where
$\chi_\sigma$ is the characteristic function of $\sigma$.
That $\varLambda$ is strictly positive definite follows
from \eqref{fin}.
   \end{proof}
   \begin{lem} \label{pdstr}
Let $A$ be a $\varDelta^*$-separative finitely generated
commutative $*$-algebra over $\mathbb K$ with unit. If
every strictly positive definite linear functional on $A$
is a moment functional, then the $*$-algebra $A$ is
semiperfect.
   \end{lem}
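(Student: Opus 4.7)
The plan is to deduce the semiperfectness of $A$ by approximating an arbitrary positive definite functional by strictly positive definite ones and then invoking the closure property \eqref{hav-lim} of the class of moment functionals under pointwise limits.

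First I would note that $\varDelta^*(A) \neq \varnothing$ by the assumption of $\varDelta^*$-separativity, so the hypothesis of \eqref{hav-lim} is satisfied and Lemma \ref{b2} applies. Apply Lemma \ref{b2} to produce a \emph{strictly} positive definite moment functional $\varLambda_0 \colon A \to \mathbb K$; this auxiliary functional is the key ingredient for the approximation step.

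Next, given any $\varLambda \in \mathcal{PD}(A)$, set
\begin{align*}
\varLambda_k := \varLambda + \tfrac{1}{k}\,\varLambda_0, \qquad k=1,2,\ldots.
\end{align*}
For every nonzero $a \in A$ one has $\varLambda_k(a^*a) \Ge \tfrac{1}{k}\varLambda_0(a^*a) > 0$, so each $\varLambda_k$ is strictly positive definite. By the hypothesis of the lemma, each $\varLambda_k$ is therefore a moment functional. Clearly $\varLambda_k(a) \to \varLambda(a)$ as $k \to \infty$ for every $a \in A$.

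Finally, since $\varDelta^*(A) \neq \varnothing$, the pointwise limit statement \eqref{hav-lim} guarantees that $\varLambda$ itself is a moment functional. Because $\varLambda \in \mathcal{PD}(A)$ was arbitrary and $\varDelta^*(A) \neq \varnothing$, this shows that $A$ is semiperfect. There is no real obstacle here beyond checking strict positive definiteness of $\varLambda_k$; the substance of the argument has already been absorbed into Lemma \ref{b2} (existence of a strictly positive definite moment functional) and into the Riesz--Haviland-type stability property \eqref{hav-lim}.
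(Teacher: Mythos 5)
Your proof is correct and follows the paper's own argument exactly: invoke Lemma \ref{b2} to obtain a strictly positive definite moment functional, perturb an arbitrary $\varLambda \in \mathcal{PD}(A)$ by $\tfrac{1}{k}$ times that functional, and pass to the pointwise limit via \eqref{hav-lim}. The only (harmless) addition is your explicit verification that each $\varLambda_k$ is strictly positive definite, which the paper leaves implicit.
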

   \begin{proof}
By Lemma \ref{b2}, there exists a moment functional
$\varTheta \colon A \to \mathbb K$ which is strictly
positive definite. Take $\varLambda \in \mathcal{PD}(A)$
and set $\varLambda_k = \varLambda + \frac 1k \varTheta$
for every integer $k \Ge 1$. Then
$\{\varLambda_k\}_{k=1}^\infty$ is a sequence of strictly
positive definite linear functionals on $A$ which pointwise
converges to $\varLambda$. By \eqref{hav-lim} and our
assumption, the functional $\varLambda$ is a moment
functional, which completes the proof.
   \end{proof}
   Now we are in a position to formulate the main result of
the paper.
   \begin{thm} \label{typC=A}
An algebraic subset of $\rbb^n$ is of type {\sf A} if and
only if it is of type~ {\sf C}.
   \end{thm}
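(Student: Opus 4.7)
The plan is to handle the two directions separately. The forward direction (type \textsf{A} $\Rightarrow$ type \textsf{C}) is immediate: if $\idealfc\varLambda = \idealc V$, then by Lemma \ref{11.XII.06}(i) the functional $\varLambda$ vanishes on $\idealc V$, and type \textsf{A} then guarantees that $\varLambda$ is a moment functional. This is already the observation made just before the theorem.

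For the converse, I would pass to the real setting, which is justified by Lemma \ref{12.XII.06} together with the equivalent formulation of type \textsf{C} given at the start of Section \ref{s4}. Set $A = \rcal_n$ and $I = \ideal V$; the case $V=\varnothing$ is trivial, so one may assume $V\neq\varnothing$, in which case $I$ is a proper real set ideal with $\zera I = V$ by Lemma \ref{dodat}. In this language, $V$ of type \textsf{C} is precisely the statement: every $\varLambda\in\mathcal{PD}(A)$ with $\idealf\varLambda = I$ is a moment functional. Proposition \ref{tabsem+} translates this to: every strictly positive definite linear functional on the quotient $A/I$ is a moment functional.

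The crucial step is to upgrade this conclusion to full semiperfectness of $A/I$. Since $I$ is a set ideal, Lemma \ref{quo}(iii) gives that $A/I$ is $\varDelta^*$-separative, and Lemma \ref{pdstr} then delivers the upgrade. Once $A/I$ is semiperfect, Proposition \ref{tabsem} converts this back into: every positive definite linear functional on $A$ vanishing on $I = \ideal V$ is a moment functional on $V$. This is exactly that $V$ is of type \textsf{B}, and Theorem \ref{A=B} (or \ref{A=B+}) identifies type \textsf{B} with type \textsf{A}, completing the proof.

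The main obstacle in this chain is the passage from strictly positive definite functionals to all positive definite functionals, i.e.\ Lemma \ref{pdstr}. Its force rests on producing a strictly positive definite moment functional on the $\varDelta^*$-separative quotient (Lemma \ref{b2}, which itself leans on the cardinality reduction of Lemma \ref{b1}), and then perturbing an arbitrary $\varLambda\in\mathcal{PD}(A/I)$ by $\frac{1}{k}$ times such a functional and appealing to the pointwise-limit closure \eqref{hav-lim} of the class of moment functionals. All other steps are the dictionary between ideals, positive definite functionals, and quotient $*$-algebras developed in Section \ref{s5}.
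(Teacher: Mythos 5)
Your argument matches the paper's proof essentially step for step: pass to the quotient via Proposition \ref{tabsem+}, upgrade from strictly positive definite to all positive definite via $\varDelta^*$-separativity (Lemma \ref{quo}(iii)) and the perturbation argument of Lemma \ref{pdstr}, translate semiperfectness back through Proposition \ref{tabsem} to conclude type \textsf{B}, and finish with Theorem \ref{A=B}. The only cosmetic differences are that you work over $\rcal_n$ rather than over $\pcal_n$ as the paper does (harmless, since Section \ref{s5} treats $\mathbb K\in\{\rbb,\cbb\}$ uniformly), and you invoke Proposition \ref{tabsem} directly where the paper routes through Corollary \ref{coralg}; both rely implicitly on \cite[Proposition 2.1]{sto} to identify ``moment functional'' with ``moment functional on $V$'' for the functionals at hand.
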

   \begin{proof}
Let $V$ be an algebraic subset of $\rbb^n$ of type {\sf C}.
Then $V=\mathcal Z_P$ for some $P \in A=\cbb[X_1, \ldots,
X_n]$ such that $P=P^*$. If $V=\varnothing$, then
$V=\mathcal Z_Q$, where $Q \in A$ is given by $Q(x)=1$ for
$x \in \rbb^n$. Clearly, the polynomial $Q$ is of type {\sf
A}. Hence, by Theorem \ref{popr+}, the algebraic set $V$ is
of type {\sf A}. Consider now the case $\mathcal Z_P\neq
\varnothing$. Applying \cite[Proposition 2.1]{sto} and
Proposition \ref{tabsem+} to the $*$-ideal
$I=\ideal{\mathcal Z_P}$ (recall that
$\zera{\ideal{\mathcal Z_P}}=\mathcal Z_P$), we deduce that
every strictly positive definite linear functional $\varXi
\colon A/\ideal{\mathcal Z_P} \to \mathbb K$ is a moment
functional. Hence, by Lemma \ref{quo}\,(iii) and Lemma
\ref{pdstr}, the $*$-algebra $A/\ideal{\mathcal Z_P}$ is
semiperfect. This and Corollary \ref{coralg} imply that the
algebraic set $V$ is of type {\sf B}. In view of Theorem
\ref{A=B+}, $V$ is of type {\sf A}. Since every algebraic
set of type {\sf A} is of type {\sf C}, the proof is
complete.
      \end{proof}
Theorem \ref{typC=A} answers in the affirmative
Question 2 posed in \cite{c-s-sz}, which asks whether
types {\sf A}, {\sf B}, and {\sf C} coincide.
Combining Theorem \ref{typC=A} with any example of an
algebraic subset $V$ of $\rbb^n$ which is not of type
{\sf A} and for which $\idealc{V} \neq \{0\}$ (see
e.g.\ \cite{bis,sto}), we deduce that there exist
non-zero set ideals of $\cbb[X_1, \ldots, X_n]$ which
are not of type {\sf C} ($n$ must be greater than or
equal to $2$). This answers in the negative Question 3
posed in \cite{c-s-sz}. Employing again Theorem
\ref{typC=A} and the fact that for every $n \Ge 2$,
the zero polynomial in $\cbb[X_1, \ldots, X_n]$ is not
of type {\sf A} (cf.\ \cite[statement (50)]{c-s-sz}),
we see that for every integer $n \Ge 2$, the zero
ideal of $\cbb[X_1, \ldots, X_n]$ is not of type {\sf
C}. This answers in the negative Question 4 posed in
\cite{c-s-sz} (see also \cite{fr} for the case $n=2$).
Note, by the way, that every $*$-ideal of $\cbb[X_1]$
is of type {\sf C} (see \cite[Section 12]{c-s-sz}).
   \section{\label{s7}The multidimensional complex moment problem}
In what follows, we regard $\pcal_{n,n}:=\cbb[X_1, \ldots,
X_n, Y_1, \ldots, Y_ n]$ as the complex $*$-algebra with
the unique involution such that $X_j^*=Y_j$ for all $j=1,
\ldots, n$. Recall that $\pcal_{2n}$ stands for the complex
$*$-algebra $\cbb[X_1, \ldots, X_{2n}]$ equipped with the
unique involution such that $X_j^*=X_j$ for all $j=1,
\ldots, 2n$ (cf.\ Section \ref{s3}). Using the
homeomorphism $\cbb^n \ni z \mapsto \gamma_z \in
\varDelta^*(\pcal_{n,n})$ given by
   \begin{align*}
\gamma_z(Q)=Q(z,\bar z), \quad z \in \cbb^n, \, Q\in
\pcal_{n,n},
   \end{align*}
and the measure transport theorem, we see that a map
$\varLambda \colon \pcal_{n,n} \to \cbb$ is a moment
functional if and only if there exists a positive Borel
measure $\mu$ on $\cbb^n$ such that
   \begin{align}    \label{commpr}
\varLambda(Q) = \int_{\cbb^n} Q(z,\bar z) \text{ d} \mu(z),
\quad Q\in \pcal_{n,n}.
   \end{align}
This is exactly the $n$-dimensional complex moment
problem, which entails determining whether for a given
map $\varLambda \colon \pcal_{n,n} \to \cbb$ there
exists a positive Borel measure $\mu$ on $\cbb^n$
satisfying condition \eqref{commpr}. The interested
reader is referred to
\cite{mgkr,dev,kil,atz,sza1,sza2,fug,b-ch-r,bis0,put,c-p,st-sz,st-sz2,st-sz3,fial}
for further information on this subject. We now adapt
the notions of types {\sf A}, {\sf B}, and {\sf C} to
the context of the complex moment problem. First, note
that a real algebraic subset $V$ of $\cbb^n=\rbb^{2n}$
can be written in terms of the complex variable as
follows
   \begin{align*}
V=\mathcal Z_P:= \{z\in \cbb^n \colon P(z,\bar z)=0\} \quad
\text{for some } P \in \pcal_{n,n}.
   \end{align*}
If \eqref{commpr} holds and $P \in \pcal_{n,n}$, then the
closed support of the measure $\mu$ is contained in
$\mathcal Z_P$ if and only if the functional $\varLambda$
vanishes on the ideal $(P)$ of $\pcal_{n,n}$ generated by
$P$, or equivalently on the ideal $\ideal{\mathcal Z_P} :=
\{Q\in \pcal_{n,n} \colon Q|_{\mathcal Z_P}=0\}$ (cf.\
\cite[Proposition 1]{st-sz}). We say that a polynomial $P
\in \pcal_{n,n}$, or the real algebraic set $V=\mathcal
Z_P$, is of {\em type} {\sf A} (respectively:\ {\sf B},
{\sf C}) with respect to $\pcal_{n,n}$ if every positive
definite linear functional $\varLambda \colon \pcal_{n,n}
\to \cbb$ for which $(P) \subset \ker \varLambda$
(respectively:\ $\ideal{V} \subset \ker \varLambda$,
$\ideal{V}=\idealf{\varLambda}$) is a moment functional;
here $\idealf{\varLambda}:= \{Q \in \pcal_{n,n}\colon
\varLambda(Q^*Q)=0\}$. Note now that the mapping $\varGamma
\colon \pcal_{2n} \to \pcal_{n,n}$ defined by
   \begin{align*}
P \mapsto P\Big(\frac 12 (X_1+Y_1), \ldots, \frac 12
(X_n+Y_n), \frac 1{2\,\I} (X_1-Y_1), \ldots, \frac 1{2\,\I}
(X_n-Y_n)\Big)
   \end{align*}
is a $*$-algebra isomorphism between $*$-algebras
$\pcal_{2n}$ and $\pcal_{n,n}$. It is easily seen that
$\varGamma((P)) =(\varGamma(P))$, $\mathcal
Z_{\varGamma(P)} = \phi^{-1}(\mathcal Z_P)$ and
$\varGamma(\ideal{\mathcal Z_P}) =\ideal{\mathcal
Z_{\varGamma(P)}}$ for every $P \in \pcal_{2n}$, where
$\phi$ is the mapping from $\cbb^n$ to $\rbb^{2n}$ given by
   \begin{align*}
\phi(z_1, \ldots, z_n) = (\mathfrak{Re} \, z_1 , \ldots,
\mathfrak{Re} \, z_n, \mathfrak{Im} \, z_1, \ldots,
\mathfrak{Im} \, z_n), \quad z_1, \ldots, z_n \in \cbb.
   \end{align*}
This enables us to show that a polynomial $P \in
\pcal_{n,n}$ is of type {\sf A} (respectively:\ {\sf B},
{\sf C}) with respect to $\pcal_{n,n}$ if and only if the
polynomial $\varGamma^{-1}(P) \in \pcal_{2n}$ is of type
{\sf A} (respectively:\ {\sf B}, {\sf C}) with respect to
$\pcal_{2n}$. Hence, we can immediately adapt Theorems
\ref{popr+} and \ref{typC=A} to the context of the
$n$-dimensional complex moment problem.
   \begin{thm} \label{popr+c}
Let $P,Q \in \pcal_{n,n}$ be such that $\mathcal Z_P =
\mathcal Z_Q$. Then $P$ is of type {\sf A} with respect to
$\pcal_{n,n}$ if and only if $Q$ is of type {\sf A} with
respect to $\pcal_{n,n}$.
   \end{thm}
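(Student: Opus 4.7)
The plan is to reduce Theorem \ref{popr+c} to the already established Theorem \ref{popr+} via the $*$-algebra isomorphism $\varGamma \colon \pcal_{2n} \to \pcal_{n,n}$ introduced just before the theorem statement. The whole argument is essentially a transport across $\varGamma$, using the three identities the paper has already recorded: $\varGamma((R))=(\varGamma(R))$, $\mathcal Z_{\varGamma(R)} = \phi^{-1}(\mathcal Z_R)$, and $\varGamma(\ideal{\mathcal Z_R}) = \ideal{\mathcal Z_{\varGamma(R)}}$ for $R \in \pcal_{2n}$.

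First, I set $\tilde P := \varGamma^{-1}(P)$ and $\tilde Q := \varGamma^{-1}(Q)$, both in $\pcal_{2n}$. Applying the zero-set identity to $\tilde P$ and $\tilde Q$, I obtain $\mathcal Z_P = \phi^{-1}(\mathcal Z_{\tilde P})$ and $\mathcal Z_Q = \phi^{-1}(\mathcal Z_{\tilde Q})$. Since $\phi \colon \cbb^n \to \rbb^{2n}$ is a bijection, the hypothesis $\mathcal Z_P = \mathcal Z_Q$ in $\cbb^n$ is equivalent to $\mathcal Z_{\tilde P} = \mathcal Z_{\tilde Q}$ in $\rbb^{2n}$.

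Next, I invoke the equivalence established in the discussion preceding the theorem: a polynomial $R \in \pcal_{n,n}$ is of type {\sf A} with respect to $\pcal_{n,n}$ if and only if $\varGamma^{-1}(R) \in \pcal_{2n}$ is of type {\sf A} with respect to $\pcal_{2n}$. Thus $P$ is of type {\sf A} w.r.t.\ $\pcal_{n,n}$ iff $\tilde P$ is of type {\sf A} w.r.t.\ $\pcal_{2n}$, and analogously for $Q$ and $\tilde Q$. Theorem \ref{popr+}, applied to $\tilde P, \tilde Q \in \pcal_{2n}$ with $\mathcal Z_{\tilde P}=\mathcal Z_{\tilde Q}$, supplies the equivalence in the middle of the chain, so concatenating the three equivalences yields the conclusion.

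There is no real obstacle here: all bookkeeping amounts to chasing $\varGamma$ through ideals and zero sets, and the key work (handling the real case in $\pcal_{2n}$) has already been done in Theorem \ref{popr+}. The only point worth double-checking is that $\phi$ is indeed a bijection between $\cbb^n$ and $\rbb^{2n}$ so that the pullback of zero sets preserves equality, but that is immediate from the formula $\phi(z_1,\ldots,z_n) = (\mathfrak{Re}\,z_1,\ldots,\mathfrak{Im}\,z_n)$.
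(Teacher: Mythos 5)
Your proof is correct and follows exactly the route the paper intends: transport $P$ and $Q$ to $\pcal_{2n}$ via $\varGamma^{-1}$, use bijectivity of $\phi$ to preserve equality of zero sets, invoke the type-{\sf A} equivalence across $\varGamma$, and close with Theorem \ref{popr+}. The paper leaves these steps implicit (``we can immediately adapt Theorems \ref{popr+} and \ref{typC=A}''), and you have simply made the chain of equivalences explicit.
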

   \begin{thm} \label{typC=Ac}
A real algebraic subset of $\cbb^n=\rbb^{2n}$ is of type
{\sf A} with respect to $\pcal_{n,n}$ if and only if it is
of type {\sf C} with respect to $\pcal_{n,n}$.
   \end{thm}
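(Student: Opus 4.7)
The plan is to reduce Theorem \ref{typC=Ac} to its real-variable analog, Theorem \ref{typC=A}, by transporting the statement along the $*$-algebra isomorphism $\varGamma\colon \pcal_{2n}\to \pcal_{n,n}$ introduced in the paragraph preceding the theorem. All of the structural identities needed to carry out this transport are already listed in that paragraph, so the proof amounts to assembling them.

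Concretely, I would fix a real algebraic subset $V=\mathcal Z_P$ of $\cbb^n=\rbb^{2n}$ with $P\in\pcal_{n,n}$ and set $\tilde P:=\varGamma^{-1}(P)\in\pcal_{2n}$, so that $\mathcal Z_{\tilde P}=\phi(V)$ is a real algebraic subset of $\rbb^{2n}$ in the sense of Section~\ref{s3}. The identities $\varGamma((\tilde P))=(P)$ and $\varGamma(\ideal{\mathcal Z_{\tilde P}})=\ideal{\mathcal Z_P}$, together with the bijection $\varLambda\mapsto \varLambda\circ\varGamma$ between positive definite complex linear functionals on $\pcal_{n,n}$ and those on $\pcal_{2n}$ (under which moment functionals correspond by the measure transport theorem applied to $\phi$), yield the equivalence already asserted above: $P$ is of type {\sf A} (respectively {\sf C}) with respect to $\pcal_{n,n}$ if and only if $\tilde P$ is of type {\sf A} (respectively {\sf C}) with respect to $\pcal_{2n}$. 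Granting this equivalence, Theorem~\ref{typC=A} applied to $\mathcal Z_{\tilde P}\subset\rbb^{2n}$ delivers the conclusion for $V$.

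The only task that remains is to confirm the compatibility of $\varGamma$ with the principal ideal $(\cdot)$, the set ideal $\ideal{\cdot}$, and the functional ideal $\idealf{\cdot}$, and to check that the bijection $\varLambda\mapsto\varLambda\circ\varGamma$ preserves the property of being a moment functional. This is routine bookkeeping with the measure transport theorem along $\phi$ and requires no analytic ingredient beyond Theorem~\ref{typC=A}, which carries all the genuinely nontrivial content. Hence I would expect no serious obstacle.
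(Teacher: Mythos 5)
Your proposal matches the paper's own treatment: the paper leaves Theorem \ref{typC=Ac} without a displayed proof precisely because it has just established, via the $*$-isomorphism $\varGamma$ and the identities $\varGamma((P))=(\varGamma(P))$, $\mathcal Z_{\varGamma(P)}=\phi^{-1}(\mathcal Z_P)$, $\varGamma(\ideal{\mathcal Z_P})=\ideal{\mathcal Z_{\varGamma(P)}}$, that each of the type properties transports between $\pcal_{n,n}$ and $\pcal_{2n}$, so the result follows immediately from Theorem \ref{typC=A}. Your reduction via $\tilde P=\varGamma^{-1}(P)$ and the bijection $\varLambda\mapsto\varLambda\circ\varGamma$ is exactly this argument.
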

   \appendix
   \section{\label{s8}}
Here, we aim to give the complex counterparts of
Lemmata \ref{realradical} and \ref{rn}, and the
equality (ii) of Lemma \ref{cor}. We preserve the
notation of Section \ref{s3}. In particular, $\rcal_n$
and $\pcal_n$ are real and complex polynomial rings
which are regarded as $*$-algebras ($\rcal_n$ with the
identity involution).

A $*$-ideal $I$ of $\pcal_n$ is said to be {\em
$*$-real} if for every finite sequence $Q_1, \ldots,
Q_s \in \pcal_n$ such that $Q_1^*Q_1 + \dots +
Q_s^*Q_s \in I$, we have $Q_1, \ldots, Q_s \in I$.
Recall that if $I$ is a $*$-ideal of $\pcal_n$, then
$I_\mathsf r := \{P \in I \colon P^*=P\}$ is an ideal
of $\rcal_n$ and $I = I_\mathsf r + \I \, I_\mathsf
r$. It is a routine matter to show that a $*$-ideal
$I$ of $\pcal_n$ is $*$-real if and only if the ideal
$I_\mathsf r$ of $\rcal_n$ is real. Moreover, if $J$
is a real ideal of $\rcal_n$, then $J + \I J$ is a
$*$-ideal of $\pcal_n$ which is $*$-real. Given a
$*$-ideal $I$ of $\pcal_n$, we denote by
$\sqrt[\mathrm{*R}]I$ the smallest $*$-real $*$-ideal
of $\pcal_n$ containing $I$, and call it the {\em
$*$-real radical} of $I$. In view of the above
discussion, the following can be deduced from Lemma
\ref{realradical}.
   \begin{lem} \label{*rad}
If $I$ is a $*$-ideal of $\pcal_n$, then
$\sqrt[\mathrm{*R}]I = \sqrt[\mathrm{R}]{I_\mathsf r} + \I
\sqrt[\mathrm{R}]{I_\mathsf r}$.
   \end{lem}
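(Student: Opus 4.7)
The plan is to prove the equality by two inclusions, invoking only the two facts explicitly recorded in the preamble of Appendix \ref{s8}: (a) a $*$-ideal $L$ of $\pcal_n$ is $*$-real if and only if the ideal $L_\mathsf r$ of $\rcal_n$ is real, and (b) if $J$ is a real ideal of $\rcal_n$, then $J + \I J$ is a $*$-real $*$-ideal of $\pcal_n$. Together with Lemma \ref{realradical} (which guarantees that $\sqrt[\mathrm R]{I_\mathsf r}$ is the smallest real ideal of $\rcal_n$ containing $I_\mathsf r$), these give everything we need.

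\medskip

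First, I would set $K := \sqrt[\mathrm R]{I_\mathsf r} + \I \sqrt[\mathrm R]{I_\mathsf r}$. Since $\sqrt[\mathrm R]{I_\mathsf r}$ is a real ideal of $\rcal_n$, fact (b) tells us $K$ is a $*$-real $*$-ideal of $\pcal_n$. Moreover, using $I = I_\mathsf r + \I\, I_\mathsf r$ and $I_\mathsf r \subset \sqrt[\mathrm R]{I_\mathsf r}$, one has $I \subset K$. By the definition of $\sqrt[\mathrm{*R}]{I}$ as the smallest $*$-real $*$-ideal containing $I$, this yields $\sqrt[\mathrm{*R}]{I} \subset K$.

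\medskip

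For the reverse inclusion, let $L := \sqrt[\mathrm{*R}]{I}$. Since $L$ is $*$-real, fact (a) shows that $L_\mathsf r$ is a real ideal of $\rcal_n$, and from $I \subset L$ we get $I_\mathsf r \subset L_\mathsf r$. By the minimality clause of Lemma \ref{realradical}, $\sqrt[\mathrm R]{I_\mathsf r} \subset L_\mathsf r$, hence
\begin{equation*}
K = \sqrt[\mathrm R]{I_\mathsf r} + \I \sqrt[\mathrm R]{I_\mathsf r} \subset L_\mathsf r + \I\, L_\mathsf r = L = \sqrt[\mathrm{*R}]{I}.
\end{equation*}
Combining the two inclusions gives the claimed equality.

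\medskip

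There is essentially no obstacle; the content of the lemma is bookkeeping between the real-ideal structure on the self-adjoint part $L_\mathsf r$ and the $*$-ideal structure on $L = L_\mathsf r + \I\, L_\mathsf r$. The only thing to be slightly careful about is verifying that the decomposition $L = L_\mathsf r + \I\, L_\mathsf r$ is used consistently on both the minimal $*$-real $*$-ideal $\sqrt[\mathrm{*R}]{I}$ and on $K$; but this is immediate from the very definition of $L_\mathsf r$ recalled in the preamble.
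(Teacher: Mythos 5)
Your proof is correct and is precisely the argument the paper intends; the paper itself compresses it into the remark that the lemma ``can be deduced from Lemma \ref{realradical}'' in view of the two preamble facts (that $L$ is $*$-real iff $L_{\mathsf r}$ is real, and that $J+\I J$ is $*$-real when $J$ is real), which are exactly the ingredients you use for the two inclusions.
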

   \begin{thm}[$*$-Real Nullstellensatz] \label{2}
If $I$ is a $*$-ideal of $\pcal_n$, then
   \begin{align*}
\idealc{\zera{I}} = \sqrt[\mathrm{*R}]I.
   \end{align*}
   \end{thm}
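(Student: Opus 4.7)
The plan is to reduce the complex statement to the real Nullstellensatz (Lemma \ref{rn}) via the real/imaginary decomposition $I = I_\mathsf r + \I\, I_\mathsf r$ available for every $*$-ideal $I$ of $\pcal_n$, together with Lemma \ref{*rad} which already translates the $*$-real radical into the real radical of $I_\mathsf r$.

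The first step is to observe that the zero set is unaffected by the decomposition, that is, $\zera I = \zera{I_\mathsf r}$. The inclusion $\zera I \subset \zera{I_\mathsf r}$ is immediate since $I_\mathsf r \subset I$, while the reverse inclusion follows by writing an arbitrary $P \in I$ as $P = P_1 + \I P_2$ with $P_1, P_2 \in I_\mathsf r$ and evaluating at $x \in \zera{I_\mathsf r}$ to get $P(x) = P_1(x) + \I P_2(x) = 0$. The second step is the equally elementary identity $\idealc{Z} = \ideal{Z} + \I\, \ideal{Z}$ for any $Z \subset \rbb^n$, which was already recorded and used in the proof of Lemma \ref{8.10.2007}.

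Combining these two observations we obtain
\begin{align*}
\idealc{\zera{I}} = \idealc{\zera{I_\mathsf r}} = \ideal{\zera{I_\mathsf r}} + \I\, \ideal{\zera{I_\mathsf r}}.
\end{align*}
Applying the Real Nullstellensatz (Lemma \ref{rn}) to the ideal $I_\mathsf r$ of $\rcal_n$ gives $\ideal{\zera{I_\mathsf r}} = \sqrt[\mathrm{R}]{I_\mathsf r}$, and invoking Lemma \ref{*rad} identifies $\sqrt[\mathrm{R}]{I_\mathsf r} + \I\, \sqrt[\mathrm{R}]{I_\mathsf r}$ with $\sqrt[\mathrm{*R}]I$, yielding the desired equality.

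There is essentially no obstacle here: all of the content sits in the real Nullstellensatz and in Lemma \ref{*rad}, and the present theorem is a straightforward bookkeeping exercise in the decomposition $\pcal_n = \rcal_n + \I\, \rcal_n$. The only point worth writing out carefully is $\zera I = \zera{I_\mathsf r}$, since without it one cannot replace the complex zero set by a real one.
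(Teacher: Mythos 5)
Your proof is correct and follows essentially the same route as the paper: the paper's one-line proof cites Lemma \ref{*rad}, Lemma \ref{rn}, and the identity \eqref{plus2}, and your argument simply re-derives \eqref{plus2} in place (via $\zera I = \zera{I_\mathsf r}$ and $\idealc Z = \ideal Z + \I\,\ideal Z$) before combining it with the same two lemmata.
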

   \begin{proof}
Apply Lemma \ref{*rad}, Lemma \ref{rn} and the equality
 \eqref{plus2}.
   \end{proof}
   \begin{pro}\label{opis*r}
If $I$ is a $*$-ideal of $\pcal_n$ and $P \in
\pcal_n$, then $P$ belongs to $\sqrt[\mathrm{*R}]I$ if
and only if there exist finitely many polynomials
$Q_1, \ldots, Q_s \in \pcal_n$ and an integer $m \Ge
0$ such that $(P^*P)^{2m} + Q_1^*Q_1 + \dots +
Q_s^*Q_s \in I$.
   \end{pro}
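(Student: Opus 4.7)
The plan is to reduce to the real version of the result (Lemma \ref{realradical}) via the decomposition supplied by Lemma \ref{*rad}. Write $P = P_1 + \I P_2$ with $P_1, P_2 \in \rcal_n$. By Lemma \ref{*rad}, membership $P \in \sqrt[\mathrm{*R}]I$ is equivalent to $P_1, P_2 \in \sqrt[\mathrm{R}]{I_\mathsf r}$, and Lemma \ref{realradical} characterizes the latter by the existence of sum-of-squares representations modulo $I_\mathsf r$.

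For the $(\Leftarrow)$ direction, suppose $(P^*P)^{2m} + Q_1^*Q_1 + \dots + Q_s^*Q_s \in I$. This element is selfadjoint, hence lies in $I_\mathsf r$. Writing $Q_j = A_j + \I B_j$ with $A_j, B_j \in \rcal_n$ gives $Q_j^*Q_j = A_j^2 + B_j^2$, while $P^*P = P_1^2 + P_2^2$. By the binomial theorem,
\[
(P_1^2 + P_2^2)^{2m} = P_1^{4m} + \sum_{k=0}^{2m-1} \binom{2m}{k}\bigl(P_1^{k} P_2^{2m-k}\bigr)^2,
\]
which expresses $(P^*P)^{2m}$ as $P_1^{4m}$ plus a sum of squares in $\rcal_n$. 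Hence $P_1^{4m}$ plus a sum of real squares lies in $I_\mathsf r$, so by Lemma \ref{realradical} one concludes $P_1 \in \sqrt[\mathrm{R}]{I_\mathsf r}$; symmetry gives $P_2 \in \sqrt[\mathrm{R}]{I_\mathsf r}$, and Lemma \ref{*rad} delivers $P \in \sqrt[\mathrm{*R}]I$.

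For the $(\Rightarrow)$ direction, assume $P \in \sqrt[\mathrm{*R}]I$, so that $P_1, P_2 \in \sqrt[\mathrm{R}]{I_\mathsf r}$ by Lemma \ref{*rad}. Since $\sqrt[\mathrm{R}]{I_\mathsf r}$ is an ideal of $\rcal_n$, it contains $P^*P = P_1^2 + P_2^2$. Applying Lemma \ref{realradical} to $P^*P$ in $\rcal_n$ produces $T_1, \ldots, T_s \in \rcal_n$ and an integer $m \Ge 0$ with $(P^*P)^{2m} + T_1^2 + \dots + T_s^2 \in I_\mathsf r \subset I$; taking $Q_j := T_j$, so that $Q_j^*Q_j = T_j^2$, yields the required representation.

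The only step with any subtlety is the binomial identity in the $(\Leftarrow)$ direction, which packages the ``cross'' terms of $(P_1^2 + P_2^2)^{2m}$ into a sum of real squares so that the real radical criterion of Lemma \ref{realradical} can be applied to $P_1$ and $P_2$ separately; everything else is a transparent transfer between the real and $*$-real radical descriptions afforded by Lemma \ref{*rad}.
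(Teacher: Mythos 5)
Your proof is correct and uses precisely the two ingredients the paper indicates (Lemma \ref{*rad} together with Lemma \ref{realradical}); the paper leaves the details as ``standard algebraic arguments,'' and you have supplied them, with the binomial expansion isolating $P_1^{4m}$ (and, by symmetry, $P_2^{4m}$) from a sum of real squares being the only step that is not a direct transfer.
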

   \begin{proof}
This can be deuced from Lemma \ref{*rad} and Lemma
\ref{realradical} via standard algebraic arguments.
   \end{proof}
We say that a proper $*$-ideal of $\pcal_n$ is {\em
$*$-prime} if for all $P,Q \in \pcal_n$ such that $PQ
\in I$ and $PQ^* \in I$, either $P$ or $Q$ belongs to
$I$. It is a matter of routine to verify that a
$*$-ideal $I$ of $\pcal_n$ is $*$-prime if and only if
the ideal $I_\mathsf r$ of $\rcal_n$ is prime.
Moreover, if $J$ is a prime ideal of $\rcal_n$, then
$J + \I J$ is a $*$-ideal of $\pcal_n$ which is
$*$-prime. This, combined with Lemma \ref{*rad} and
the definition of $\sqrt[\mathrm{R}]{I_\mathsf r}$,
leads to the following description of
$\sqrt[\mathrm{*R}]I$.
   \begin{pro}\label{*opr}
If $I$ is a $*$-ideal of $\pcal_n$, then
$\sqrt[\mathrm{*R}]I$ is the intersection of all
$*$-real $*$-prime $*$-ideals of $\pcal_n$ containing
$I$ provided such a $*$-ideal exists, and
$\sqrt[\mathrm{*R}]I = \pcal_n$ otherwise.
   \end{pro}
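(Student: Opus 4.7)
My plan is to reduce everything to the real case, where the analogous description of $\sqrt[\mathrm{R}]{I_\mathsf r}$ as an intersection of real prime ideals holds by definition, and then transport the statement back to $\pcal_n$ via the correspondence $J \leftrightarrow J + \I J$.

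First, I would record the bijection that the paper has essentially already established. The paper states, just before the proposition, that a $*$-ideal $K$ of $\pcal_n$ is $*$-prime if and only if $K_\mathsf r$ is prime in $\rcal_n$, and that $J$ prime in $\rcal_n$ gives $J + \I J$ a $*$-prime $*$-ideal. The analogous facts for $*$-real are already in the paper. Combining these, the assignments
\[
K \mapsto K_\mathsf r, \qquad J \mapsto J + \I J
\]
give mutually inverse bijections between the family $\mathcal K$ of $*$-real $*$-prime $*$-ideals of $\pcal_n$ containing $I$ and the family $\mathcal J$ of real prime ideals of $\rcal_n$ containing $I_\mathsf r$. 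The inverse relations $K = K_\mathsf r + \I K_\mathsf r$ and $(J + \I J)_\mathsf r = J$ follow from the standard decomposition $P = \tfrac12(P+P^*) + \I \cdot \tfrac1{2\I}(P - P^*)$.

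Next I would split into the two cases. If $\mathcal K \neq \varnothing$, then also $\mathcal J \neq \varnothing$, and by definition of the real radical $\sqrt[\mathrm R]{I_\mathsf r} = \bigcap_{J \in \mathcal J} J$. A polynomial $P = P_1 + \I P_2$ with $P_1, P_2 \in \rcal_n$ lies in $J + \I J$ if and only if $P_1, P_2 \in J$, so
\[
\bigcap_{K \in \mathcal K} K \;=\; \bigcap_{J \in \mathcal J}(J + \I J) \;=\; \Bigl(\bigcap_{J \in \mathcal J} J\Bigr) + \I\Bigl(\bigcap_{J \in \mathcal J} J\Bigr) \;=\; \sqrt[\mathrm R]{I_\mathsf r} + \I\,\sqrt[\mathrm R]{I_\mathsf r},
\]
which by Lemma \ref{*rad} equals $\sqrt[\mathrm{*R}]I$. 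If $\mathcal K = \varnothing$, then $\mathcal J = \varnothing$ by the bijection, so by the convention in the definition of the real radical $\sqrt[\mathrm R]{I_\mathsf r} = \rcal_n$, and Lemma \ref{*rad} gives $\sqrt[\mathrm{*R}]I = \rcal_n + \I\,\rcal_n = \pcal_n$.

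There is no real obstacle: the entire argument is routine bookkeeping once one observes that the decomposition $P = P_1 + \I P_2$ is compatible with passage to $(\,\cdot\,)_\mathsf r$ and with forming intersections. The only mild subtlety is making sure the $\varnothing$ case matches the convention in the statement, which is handled by the same convention already built into the definition of $\sqrt[\mathrm R]{\,\cdot\,}$ in Section \ref{s1}.
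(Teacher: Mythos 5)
Your argument is correct and matches the route the paper intends: it invokes exactly the ingredients the paper lists (the $*$-prime/$*$-real vs.\ prime/real correspondences via $K \mapsto K_\mathsf r$ and $J \mapsto J + \I J$, Lemma \ref{*rad}, and the definition of $\sqrt[\mathrm R]{\cdot}$ as an intersection of real prime ideals), and your write-up simply spells out the bookkeeping that the paper leaves implicit.
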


   \begin{lem} \label{ccont}
If $\varLambda$ is a positive definite complex linear
functional on $\pcal_n$, then
$\idealfc\varLambda=\sqrt[\mathrm{*R}]
{\idealfc\varLambda}$.
   \end{lem}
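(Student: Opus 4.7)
The plan is to show that $\idealfc\varLambda$ is itself $*$-real, which immediately gives $\sqrt[\mathrm{*R}]{\idealfc\varLambda} = \idealfc\varLambda$ by the very definition of the $*$-real radical as the smallest $*$-real $*$-ideal containing its argument.

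First I would exhibit the real decomposition of $\idealfc\varLambda$. By the computation carried out in the proof of Lemma \ref{11.XII.06}\,(ii), we have
\begin{align*}
\idealfc\varLambda = \idealf{\varLambda_\mathsf r} + \I\, \idealf{\varLambda_\mathsf r},
\end{align*}
which, since $\idealf{\varLambda_\mathsf r} \subset \rcal_n$, identifies $(\idealfc\varLambda)_\mathsf r$ with $\idealf{\varLambda_\mathsf r}$. Next I would apply Lemma \ref{cor}\,(ii) to the positive definite real linear functional $\varLambda_\mathsf r$ on $\rcal_n$ to conclude that $\idealf{\varLambda_\mathsf r}$ is a real ideal of $\rcal_n$.

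Finally, I would invoke the characterization noted (as a routine observation) just before Lemma \ref{*rad}: a $*$-ideal $I$ of $\pcal_n$ is $*$-real if and only if its self-adjoint part $I_\mathsf r$ is a real ideal of $\rcal_n$. Applied to $I = \idealfc\varLambda$, this yields that $\idealfc\varLambda$ is $*$-real, and hence $\idealfc\varLambda = \sqrt[\mathrm{*R}]{\idealfc\varLambda}$.

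There is no real obstacle here; the whole argument is a direct transfer from the real case to the complex one via the decomposition $I = I_\mathsf r + \I I_\mathsf r$. The only point that requires a moment of care is checking that $(\idealfc\varLambda)_\mathsf r$ is exactly $\idealf{\varLambda_\mathsf r}$ and not a larger real ideal; this is immediate from the displayed decomposition of $\idealfc\varLambda$ above, because the real part of a set of the form $J + \I J$ (with $J \subset \rcal_n$) equals $J$.
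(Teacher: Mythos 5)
The paper states Lemma \ref{ccont} at the very end of the Appendix without giving a proof, so there is nothing explicit to compare against; your argument, however, is correct and fits squarely within the machinery the Appendix sets up for exactly this kind of transfer. Each step checks out: the decomposition $\idealfc\varLambda = \idealf{\varLambda_\mathsf r} + \I\,\idealf{\varLambda_\mathsf r}$ does follow from the proof of Lemma \ref{11.XII.06}\,(ii); the identification $(\idealfc\varLambda)_\mathsf r = \idealf{\varLambda_\mathsf r}$ is correct because the self-adjoint part of $J+\I J$ with $J\subset\rcal_n$ is $J$; Lemma \ref{cor}\,(ii) together with Lemma \ref{realradical} says exactly that $\idealf{\varLambda_\mathsf r}$ is a real ideal; and the remark preceding Lemma \ref{*rad} converts that into $*$-reality of $\idealfc\varLambda$, whence it equals its own $*$-real radical.

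One small observation: the detour through the real case, while certainly in the spirit of the Appendix, is slightly longer than necessary. You can instead mimic the proof of Lemma \ref{cor}\,(ii) directly in the complex setting: if $Q_1^*Q_1 + \dots + Q_s^*Q_s \in \idealfc\varLambda$, then by Lemma \ref{11.XII.06}\,(i) this sum lies in $\ker\varLambda$, so $\sum_j \varLambda(Q_j^*Q_j)=0$; since each summand is nonnegative, each vanishes, giving $Q_j \in \idealfc\varLambda$. This shows $\idealfc\varLambda$ is $*$-real in one stroke, without invoking the decomposition or the real-case lemma. Both routes are valid; yours has the advantage of reusing already-proved real facts verbatim, while the direct one is shorter and self-contained.
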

%   \begin{proof}
%Note that
%   \end{proof}

   \bibliographystyle{amsalpha}
   
   \end{document}